\documentclass[10pt]{amsart}
\usepackage{amsmath}
  \usepackage{graphics} 
\usepackage{graphicx}  
  \usepackage{epsfig} 
 \usepackage[colorlinks=true]{hyperref}
  \usepackage{float}
  \usepackage{caption}
\usepackage{amsmath}
\usepackage{wrapfig}
\usepackage{graphicx}
\usepackage{subcaption}
\usepackage{amssymb}

\setcounter{topnumber}{2}
\setcounter{bottomnumber}{2}
\setcounter{totalnumber}{4}

\newtheorem{theorem}{Theorem}[section]
\newtheorem{corollary}{Corollary}

\newtheorem{lemma}[theorem]{Lemma}

\theoremstyle{definition}

\newtheorem{remark}{Remark}

\title[ TYC blow-up ]
    {Large and Small Data Blow-Up Solutions in the Trojan Y Chromosome Model}
%


\subjclass{Primary: 34C11, 34D05, 35B44; Secondary: 92D25, 92D40}
 \keywords{ finite time blow-up, biological invasions, biological control}



\begin{document}
\maketitle
\centerline{\scshape Rana D. Parshad$^{1}$, Matthew A. Beauregard$^{2}$, }
\centerline{\scshape  Eric M. Takyi$^{1}$, Thomas Griffin$^{1}$, Landrey Bobo$^{3}$ }
\medskip
{\footnotesize
 \centerline{1) Department of Mathematics,}
 \centerline{Iowa State University,}
   \centerline{Ames, IA 50011, USA.}
   \medskip
   \centerline{2) Department of Mathematics,}
 \centerline{Stephen F. Austin State University,}
  \centerline{Nacogdoches, TX 75962 , USA }
    \medskip
      \centerline{3) University of Texas,}
  \centerline{Austin, TX 78712 , USA }
    \medskip

}

\begin{abstract}
The Trojan Y Chromosome Strategy (TYC) is an extremely well investigated biological control method for controlling invasive populations with an XX-XY sex determinism. In \cite{GP12, WP14} various dynamical properties of the system are analyzed, including well posedness, boundedness of solutions, and conditions for extinction or recovery. These results are derived under the assumption of positive solutions.
In the current manuscript, we show that if the introduction rate of trojan fish is zero, under certain large data assumptions, negative solutions are possible for the male population, which in turn can lead to finite time blow-up in the female and male populations. A comparable result is established for \emph{any} positive initial condition if the introduction rate of trojan fish is large enough.  Similar finite time blow-up results are obtained in a spatial temporal TYC model that includes diffusion.  Lastly, we investigate improvements to the TYC modeling construct that may dampen the mechanisms to the blow-up phenomenon or remove the negativity of solutions. 
The results draw into suspect the reliability of current TYC models under certain situations. 
\end{abstract}

 \section{Introduction}

The detrimental effects of aquatic invasive species is well-documented \cite{A06, A12, B07, C01, L12, M00, 089, S97, V96}.  Current control methods rely primarily on chemical treatment \cite{SL15} and are environmentally detrimental.
The Trojan Y chromosome strategy (TYC) is a promising eradication strategy which circumvents such detriment \cite{GutierrezTeem06, SL15,TGP13}.  It involves introducing a YY male or YY male and feminised YY male into an invasive population. The off-spring of the YY male or YY feminised male are only wild type males or YY males. This skews the sex ratio of subsequent generations towards all males, and extinction of the population may occur (see Fig.~\ref{Fig:TYC-Eradication-Strategy}).

 \begin{figure}[H]
 \centering{
  {\includegraphics[height=4cm, width=11.5cm]{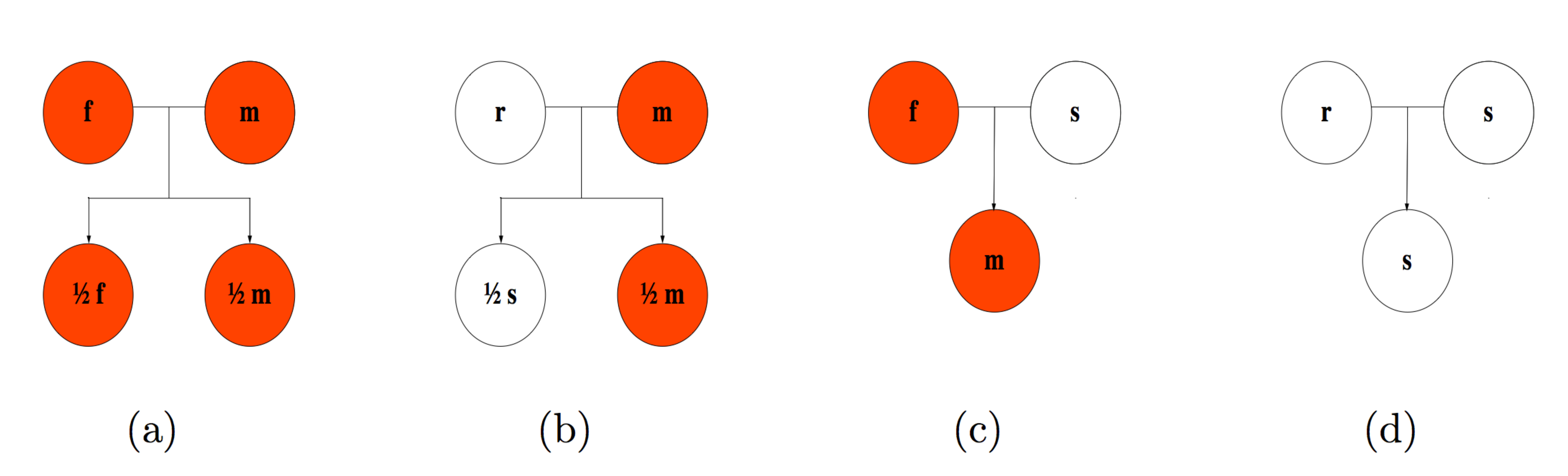}
  }
 \caption{\small The pedigree tree of the TYC model (that demonstrates {\it Trojan Y-Chromosome} eradication strategy).
 (a) Mating of a wild-type XX female (f) and a wild-type XY male (m). (b) Mating of a wild-type XY male (m) and a sex-reversed YY female (r). (c) Mating of a wild-type XX female (f) and a YY supermale (s). (d) Mating of a sex-reversed YY trojan female (r) and a YY supermale (s). Red color represents wild types, and white color represents phenotypes.}

 \label{Fig:TYC-Eradication-Strategy}
 }
 \end{figure}

The classical model of the TYC strategy was first pioneered in \cite{GutierrezTeem06}.  A detailed analysis and spatial effects were developed in \cite{Gutierrez12,ParshadGutierrez11} and is give below:
\begin{eqnarray}
\label{TYCOrgFeq} \partial_t f &=& D \Delta f + \frac{1}{2} f m \beta L  - \delta f, \\
\label{TYCOrgMeq} \partial_t m &=& D \Delta m + \left(\frac{1}{2}fm+\frac{1}{2}rm+fs \right) \beta L  -\delta m,  \\
\label{TYCOrgSeq} \partial_t s  &=& D \Delta s + \left(\frac{1}{2}rm+rs\right)\beta L  - \delta s, \\
\label{TYCOrgReq} \partial_t r  &=& D \Delta r + \mu  - \delta r,
\end{eqnarray}
with appropriate initial and boundary conditions that might be Dirichlet, Neumann or mixed. The functions $f,~m,~s,$ and $r$ are the population densities of individuals in each associated class: XX females, XY males, YY males, and YY females, respectively. The logistic term, $L = (1 - (f+m+s)/K)$, where $K$ is the carrying capacity of the ecosystem, attempts to penalize or encourage growth of populations when above or below $K$, respectively. The positive constants $\beta$ and $\delta$ represent the per capita birth and death rates, respectively; nonnegative constant $\mu$ denotes the rate at which the YY females $r$ are introduced; The species move by diffusion, with equal diffusivity coefficients $D$.  The TYC strategy is now in current practice and field studies that investigate the survivability and reproductively or introduced have been reported \cite{Schill2017}.  In recent years, the model has been the subject of intense theoretical investigation \cite{CW07a, CW07b, CW09, Gutierrez12, Schill18, P11, ParshadGutierrez11, p10, Parshad13, P09,Schill16,TGP13,SDE2013, WP14, Z12}.

It is clear that an equilibrium solution to TYC model (when $\mu =0$) is the trivial solution $(0,0,0,0)$, which is called the extinction state.  In the absence of super males or females and if $\frac{\delta}{\beta} < \frac{K}{16}$ then the only equilibrium solution is the extinction state, which is locally stable.  A key result is established in \cite{ParshadGutierrez11,WP14} and paraphrased here:
\begin{theorem}
Let $\frac{\delta}{\beta} < \frac{K}{16}$.  If $\mu=0$, the extinction state, $(0,0,0,0)$ is locally stable. In the presence of supermales/females the equilibrium solution, $(0,0,0,r^{*})$, where $r^* = \mu/\delta$ is locally stable if $\beta \mu^{2} - \beta K \delta \mu + K \delta^{3}  > 0$ and can be made globally stable for large enough $\mu$.
\end{theorem}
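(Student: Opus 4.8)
The plan is to establish both claims by \emph{linearizing about the spatially homogeneous steady states} and reducing the stability of the full reaction--diffusion system \eqref{TYCOrgFeq}--\eqref{TYCOrgReq} to a finite-dimensional spectral problem. Since all four components diffuse with the \emph{same} coefficient $D$, I would expand an arbitrary perturbation in the $L^2$-orthonormal eigenfunctions $\{\phi_k\}$ of $-\Delta$ (under the imposed Neumann, Dirichlet, or mixed conditions); each mode $v_k$ then obeys $\dot v_k=(J-D\lambda_k I)v_k$, where $J$ is the Jacobian of the reaction kinetics at the steady state and $\lambda_k\ge 0$ are the Laplacian eigenvalues. Because $D\lambda_k\ge 0$ shifts the spectrum of $J$ to the left, it suffices to prove that $J$ itself (the $k=0$ mode) is Hurwitz; diffusion can only help. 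The problem thus reduces to a Routh--Hurwitz analysis of a $4\times4$ matrix, together with a separate Lyapunov argument for the global claim.

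For the extinction state with $\mu=0$, I first note that $(0,0,0,0)$ is an equilibrium: every reaction term in \eqref{TYCOrgFeq}--\eqref{TYCOrgSeq} is a product of at least two densities and so vanishes, while $\mu=0$ removes the source in \eqref{TYCOrgReq}. Computing $J$ at the origin, every off-diagonal entry carries a factor of one of $f,m,s,r$ and hence vanishes, leaving $J=-\delta I$; all four eigenvalues equal $-\delta<0$, giving local asymptotic stability, which the equal-diffusivity reduction promotes to the PDE. The hypothesis $\delta/\beta<K/16$ does not enter this linear computation directly; its role, as recalled in the text, is to guarantee that the extinction state is the \emph{only} nonnegative equilibrium in this regime, so that the local basin is not pierced by a nearby invariant set.

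For $(0,0,0,r^*)$ with $r^*=\mu/\delta$, I would again verify equilibrium and compute $J$. The $f$- and $r$-rows decouple, each contributing an eigenvalue $-\delta$, and the residual $2\times2$ block in the $m,s$ variables carries the couplings $\tfrac12 rm$ and $rs$ evaluated at $L=1$. Applying Routh--Hurwitz to this block, namely negativity of its trace together with positivity of its determinant (the latter being the product of the surviving eigenvalues), yields a threshold in $\mu$. I should stress one subtlety I expect to be delicate: since the carrying capacity $K$ enters the kinetics only through $\partial_f L=\partial_m L=\partial_s L=-1/K$, and these derivatives are multiplied by densities that vanish at $(0,0,0,r^*)$, the constant $K$ drops out of the Jacobian at this boundary point. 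Recovering the stated, $K$-dependent inequality $\beta\mu^2-\beta K\delta\mu+K\delta^3>0$ therefore requires going beyond the boundary-point linearization: I would compute the interior (coexistence) equilibria by solving the stationary kinetics and determine the threshold at which a positive branch collides with $(0,0,0,r^*)$. It is this transcritical/fold condition, an algebraic resultant of the equilibrium equations in which $-1/K$ survives, that should organize into the quadratic in $\mu$, and carrying out this elimination cleanly is the technical heart of the local result.

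Finally, for global stability at large $\mu$ I would not use linearization. The $r$-equation \eqref{TYCOrgReq} is linear and decoupled, so the comparison principle gives $r(\cdot,t)\to r^*$ uniformly; the task is to control the residual $(f,m,s)$-dynamics. The plan is to first produce a bounded, positively invariant absorbing region, where $K$ enters naturally through the logistic sign of $L$ bounding $f+m+s$, and then to construct a Lyapunov functional, for instance a weighted mass $\int_\Omega(af+bm+cs)\,dx$ with positive weights $a,b,c$ tuned so that its time derivative is strictly negative once $\mu$ is large. I expect this to be the main obstacle, for a structural reason: the coupling $\tfrac12 rm\,\beta L$ is a \emph{positive} feedback on $m$ that grows with $r^*=\mu/\delta$, so increasing $\mu$ enlarges precisely the term that resists convergence, and any successful estimate must play this off against logistic saturation (the regime where $L$ becomes small or negative). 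Choosing the weights so that the indefinite cross terms from $\tfrac12 fm\,\beta L$, $\tfrac12 rm\,\beta L$, and $fs\,\beta L$ are dominated uniformly in space and time, while remaining consistent with $\delta/\beta<K/16$ and with the interior-equilibrium threshold above, is where the real work lies.
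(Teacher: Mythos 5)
First, note that the paper itself does not prove this theorem: it is stated explicitly as a paraphrase of results established in \cite{ParshadGutierrez11,WP14}, so there is no internal proof to compare against, and your proposal has to be judged against the derivation in those references --- which is exactly the linearization-plus-Routh--Hurwitz route you outline, with equal diffusivities reducing the PDE problem to the kinetic Jacobian. Your treatment of the extinction state is correct: at the origin every reaction term is at least quadratic, the Jacobian is $-\delta I$, and the Laplacian modes only shift the spectrum further left.

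The genuine gap is in how you propose to recover the condition $\beta\mu^{2}-\beta K\delta\mu+K\delta^{3}>0$. You correctly compute that, with $L=1-(f+m+s)/K$ as printed in \eqref{TYCOrgFeq}--\eqref{TYCOrgReq}, the constant $K$ disappears from the Jacobian at $(0,0,0,r^{*})$ (the $(s,s)$ entry is $\beta r^{*}-\delta=\beta\mu/\delta-\delta$, which is \emph{positive} for large $\mu$). But the workaround you then propose --- locating a transcritical/fold collision of interior equilibria with the boundary state --- cannot repair this: local stability of a hyperbolic equilibrium is determined by its own Jacobian and by nothing else, so no resultant of the equilibrium equations can convert a $K$-independent spectrum into a $K$-dependent stability criterion; at best such an analysis re-detects the parameter value where an eigenvalue of that same Jacobian crosses zero. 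Moreover, with the printed $L$ the boundary state is genuinely unstable for large $\mu$, so no correct argument can deliver the claimed large-$\mu$ global stability for that system. The resolution is that the model actually analyzed in \cite{ParshadGutierrez11,WP14} carries the trojan females inside the logistic term, $L=1-(f+m+s+r)/K$. Then $L^{*}=1-\mu/(\delta K)$ at the equilibrium, the decisive eigenvalue is $\beta L^{*}r^{*}-\delta=\beta\bigl(1-\tfrac{\mu}{\delta K}\bigr)\tfrac{\mu}{\delta}-\delta$, and requiring it to be negative is, after clearing the positive factor $\delta^{2}K$, precisely $\beta\mu^{2}-\beta K\delta\mu+K\delta^{3}>0$; the remaining eigenvalues are $-\delta$, $-\delta$, and $\tfrac12\beta L^{*}r^{*}-\delta$, which is then automatically negative. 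This also dissolves the obstacle you flag for the global claim: for large $\mu$, $r\to\mu/\delta\gg K$ forces $L<0$ uniformly after a transient, so the coupling $\tfrac12 rm\,\beta L$ is a strong \emph{negative} feedback rather than the positive one you were trying to dominate, and $f$, $m$, $s$ decay essentially exponentially without any delicate choice of Lyapunov weights.
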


This theorem encapsulates an important theoretical result, that eradication is always possible, if the introduction of supermales, $\mu$, is sufficiently large.  This key result relies on the assumption that solutions remain positive for all initial nonnegative conditions. However, to the best of our knowledge, the positivity of solutions has not been proven rigorously. Subsequently, the analytical results are correct if solutions remain positive.

In this paper we show:

\begin{enumerate}
\item The spatially independent TYC system, \eqref{ClassicTYCfeq}-\eqref{ClassicTYCseq}, can have negative solutions, in the male population $m$, in certain situations. This result is given via Lemma \ref{lem:1a}.

\item The negativity of male population may lead to finite time blow-up in the female population towards $+\infty$ and the male population towards $- \infty$, for $\mu=0$ and large enough initial data. This is shown via Theorem \ref{thm:2a}.

\item For any positive initial data there exists a critical introduction rate $\mu^{*} > 0$, such that, $\forall \mu > \mu^{*}$ finite time blow-up occurs in \eqref{ClassicTYCfeq}-\eqref{ClassicTYCseq}. This is a consequence of Theorem \ref{thm:2a1}.

\item Similar results are derived in the spatial-temporal (PDE) model given by \eqref{Cpf}-\eqref{Cps}. This is shown via Theorems \ref{thm:3a} and \ref{thm:3apd}.

\end{enumerate}

We then discuss the practical relevance of these results to biological control and possible model corrections and restrictions.

\section{Finite Time Blow-Up in Dynamical Systems Model}

The classical population model of the TYC strategy relates the populations of the wild-type XX females ($f$), wild-type XY males ($m$), and the YY supermale ($s$) populations over time. However, in current field experiments only the supermale population has been introduced into the wild \cite{Schill18}.  Subsequently, in this section we investigate the three species TYC model, that is,
\begin{eqnarray}
\label{ClassicTYCfeq} \dot{f} &=& F(f,m,s) = \frac{1}{2}  \beta L fm - \delta f ,\\
\label{ClassicTYCmeq} \dot{m} &=& G(f,m,s) = \frac{1}{2}  \beta L fm + \beta L fs- \delta m,\\
\label{ClassicTYCseq} \dot{s} &=& H(f,m,s) = \mu - \delta s,
\end{eqnarray}
where the logistic term $L$ and other parameters are as defined earlier.  Again, the parameters are assumed to nonnegative.  In addition, we assume positive initial conditions $(f_{0}, m_{0}, s_{0})$.

\subsection{Negative Solutions}
We recall a result guaranteeing nonnegativity of solutions from \cite{P00, P10}:

\begin{lemma}
\label{lem:l1}
Consider the following ODE system,
\begin{eqnarray}
\label{C1} \dot{f} &=&  F(f,m,s),\\
\label{C2} \dot{m} &=&  G(f,m,s),\\
\label{C3} \dot{s} &=&  H(f,m,s).
\end{eqnarray}
Nonnegativity of solutions is preserved in time, that is,
\begin{equation*}
f_{0},m_{0},s_{0} \geq 0~~ \Rightarrow~~ \forall t \in [0, T_{max}),~ f(t),m(t),s(t) \geq 0.
\end{equation*}
if and only if
\begin{equation*}
\forall f(t),m(t),s(t) \geq 0,
\end{equation*}
we have that
\begin{equation*}
F(0,m,s),~ G(f,0,s),~ H(f,m,0) \geq 0.
\end{equation*}
\end{lemma}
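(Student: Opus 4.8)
The plan is to recognize Lemma~\ref{lem:l1} as the standard invariance criterion for the nonnegative orthant $\mathbb{R}^3_{\geq 0}$ under the flow of \eqref{C1}--\eqref{C3}: the three stated sign conditions are precisely the Nagumo-type tangency (quasi-positivity) conditions asserting that the vector field points into the orthant along each of its boundary faces $\{f=0\}$, $\{m=0\}$, $\{s=0\}$. Throughout I would assume $F,G,H$ are locally Lipschitz, so that \eqref{C1}--\eqref{C3} has a unique solution on a maximal interval $[0,T_{max})$ that depends continuously on data and parameters; for the TYC nonlinearities this holds automatically since they are polynomial.

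\emph{Necessity.} I would argue by contraposition. Suppose one of the boundary conditions fails, say there exist $m_0,s_0\geq 0$ with $F(0,m_0,s_0)<0$ (the remaining two cases are identical). Take the initial datum $(f_0,m_0,s_0)=(0,m_0,s_0)\in\mathbb{R}^3_{\geq 0}$. Then $\dot f(0)=F(0,m_0,s_0)<0$, so $f(t)=\dot f(0)\,t+o(t)<0$ for all sufficiently small $t>0$, and nonnegativity is not preserved. This is the desired contrapositive, and it is the easy direction.

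\emph{Sufficiency.} This is the substantive direction, and the main obstacle is the degenerate boundary behavior: the hypothesis yields only $\dot f=F(0,m,s)\geq 0$ on the face $\{f=0\}$, so a naive ``first time a coordinate hits zero'' argument breaks down when this derivative vanishes, since $f$ could cross zero with zero velocity. To circumvent this I would introduce, for each $\epsilon>0$, the perturbed system
\begin{equation*}
\dot f_\epsilon = F(f_\epsilon,m_\epsilon,s_\epsilon)+\epsilon,\qquad \dot m_\epsilon = G(f_\epsilon,m_\epsilon,s_\epsilon)+\epsilon,\qquad \dot s_\epsilon = H(f_\epsilon,m_\epsilon,s_\epsilon)+\epsilon,
\end{equation*}
with the same datum $(f_0,m_0,s_0)\geq 0$. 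Along $\{f_\epsilon=0\}$ with $m_\epsilon,s_\epsilon\geq 0$ one now has the \emph{strict} inequality $\dot f_\epsilon = F(0,m_\epsilon,s_\epsilon)+\epsilon\geq\epsilon>0$, and similarly for $m_\epsilon,s_\epsilon$. Fixing a compact subinterval $[0,T]\subset[0,T_{max})$, I would let $\tau$ be the first time in $(0,T]$ at which some coordinate of the perturbed solution returns to $0$; before $\tau$ all three coordinates are positive, and at $\tau$ they are nonnegative with (say) $f_\epsilon(\tau)=0$. Then $f_\epsilon>0$ on $(0,\tau)$ forces $\dot f_\epsilon(\tau)\leq 0$, contradicting $\dot f_\epsilon(\tau)\geq\epsilon>0$. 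Hence no such $\tau$ exists and $f_\epsilon,m_\epsilon,s_\epsilon>0$ on $(0,T]$.

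Finally I would pass to the limit $\epsilon\to 0$. By continuous dependence on the parameter $\epsilon$, the perturbed solutions converge uniformly on $[0,T]$ to the solution $(f,m,s)$ of \eqref{C1}--\eqref{C3}, and since each $f_\epsilon,m_\epsilon,s_\epsilon\geq 0$ the limits satisfy $f,m,s\geq 0$ on $[0,T]$. As $T<T_{max}$ was arbitrary, nonnegativity holds on all of $[0,T_{max})$, completing the proof. The only delicate points I would check carefully are that the perturbed solutions remain defined on the common subinterval $[0,T]$ for all small $\epsilon$ (guaranteed by continuous dependence together with compactness of $[0,T]$) and the uniform convergence invoked in the limit.
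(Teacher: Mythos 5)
Your proof is correct, but note that the paper itself offers no proof of this lemma at all: it is recalled as a known result from the cited references (Pierre--Schmitt and Pierre), and the text proceeds directly to apply it. What you have written is the standard self-contained argument for the quasi-positivity (Nagumo tangency) criterion: the contrapositive for necessity, and for sufficiency the $\epsilon$-perturbation of the vector field to make the boundary inequalities strict, a first-exit-time contradiction, and passage to the limit $\epsilon \to 0$ by continuous dependence. You correctly identify and handle the one genuinely delicate point -- that the unperturbed field may be merely tangent to a face, so a naive first-crossing argument fails -- and you also flag the need for the perturbed solutions to exist on a common compact subinterval. This supplies a rigorous justification that the paper leaves implicit, so it complements rather than conflicts with the paper's treatment.
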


Notice that by \eqref{ClassicTYCmeq} that $G(f,0,s) = \beta f s \left( 1 - (f+s)/K\right)$.  Hence, if $f(t)+s(t)>K$ at some time $t$ then $1-(f+s)/K<0$ which gives $G(f,0,s)<0$, which violates the above lemma.  This suggests negative solutions are possible with the current model of TYC.  This is stated in the following lemma.

\begin{lemma}
\label{lem:1a}
Consider the TYC system given by \eqref{ClassicTYCfeq}-\eqref{ClassicTYCseq}.
Then there exists positive initial data $(f_{0}, m_{0}, s_{0})$, and a time interval $[T_{1}, T_{2}] \in (0,\infty)$, s.t for solutions emanating from this data, the male population $m(t)$ is negative on $[T_{1}, T_{2}]$.
\end{lemma}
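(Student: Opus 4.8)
The plan is to exploit exactly the failure of the nonnegativity criterion of Lemma~\ref{lem:l1} that the preceding computation exposes. Since $G(f,0,s)=\beta f s\,(1-(f+s)/K)$, the growth rate of $m$ is strictly negative at any state with $m=0$ for which $f+s>K$. The idea is therefore to choose positive initial data with $f_0+s_0$ exceeding $K$ by a fixed margin and with $m_0>0$ taken very small, so that the negative forcing term $\beta L f s$ drives $m$ down through zero before the total $f+s$ can relax back below $K$.

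Concretely, I would fix $f_0,s_0>0$ with $f_0+s_0=K+2\eta$ for some $\eta>0$, leaving $m_0>0$ as a small parameter to be pinned down at the end. Because the vector field $(F,G,H)$ in \eqref{ClassicTYCfeq}-\eqref{ClassicTYCseq} is polynomial, local existence and uniqueness give a $C^1$ solution on some $[0,T_{max})$, and continuity in $t$ produces a time $\tau>0$, chosen uniformly over all sufficiently small $m_0$, on which the bounds $f(t)+s(t)\ge K+\eta$, $f(t)\ge f_0/2$, and $s(t)\ge s_0/2$ simultaneously hold. On this window $L(t)=1-(f+m+s)/K\le-\eta/K<0$ whenever $m\ge0$.

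Next I would bound $\dot m$ on $[0,\tau]$. Rewriting \eqref{ClassicTYCmeq} as $\dot m=\beta L f s+m\big(\tfrac12\beta L f-\delta\big)$ and noting that the bracketed coefficient is negative while $L<0$, the term $m\big(\tfrac12\beta L f-\delta\big)$ is nonpositive as long as $m\ge0$; hence $\dot m\le\beta L f s\le-\beta(\eta/K)(f_0/2)(s_0/2)=:-c<0$. Integrating gives $m(t)\le m_0-ct$ for as long as $m$ remains nonnegative, so choosing $m_0<c\tau$ forces $m$ to vanish at some first time $T_1'\in(0,\tau)$. At that instant $m(T_1')=0$ while $f(T_1')+s(T_1')>K$ and $f,s>0$, so $\dot m(T_1')=\beta L(T_1')f(T_1')s(T_1')<0$: the zero crossing is transversal, and by continuity $m$ is strictly negative on some interval $[T_1,T_2]\subset(T_1',\tau)$, as claimed.

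The step I expect to demand the most care is securing the window $[0,\tau]$ of positive length on which $f+s$ stays above $K$; without it the negative forcing term is unavailable and the argument collapses. The resolution is the crude differential inequalities above, which show that $f$ and $m$ decrease while $L<0$ and that $s$ is governed explicitly by $\dot s=\mu-\delta s$ (hence independent of $f$ and $m$), so that the control of $f+s$ on $[0,\tau]$ is independent of $m_0$; taking $m_0$ small then guarantees the crossing time $m_0/c$ lies safely inside the window.
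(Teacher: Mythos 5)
Your proof is correct, but it takes a genuinely different route from the paper's. The paper's own argument is very short: it checks that $F(0,m,s)=0$ and $H(f,m,0)=\mu\ge 0$ (so $f$ and $s$ stay nonnegative), observes that $G(f,0,s)=\beta fs\left(1-\frac{f+s}{K}\right)<0$ once $f+s>K$, and then appeals to the ``if and only if'' character of Lemma \ref{lem:l1} to conclude that negative values of $m$ must occur for data with $f_0,s_0\gg K$. You instead give a direct, quantitative construction: you place the data near the boundary $m=0$ (taking $m_0$ small with $f_0+s_0=K+2\eta$), secure a uniform time window on which $f+s$ stays above $K+\eta$ using the decoupled equation for $s$ and a crude Gronwall bound on $f$, and integrate the inequality $\dot m\le \beta L f s\le -c$ to force a transversal zero crossing of $m$. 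What your approach buys is a self-contained proof that does not lean on the ``only if'' direction of Lemma \ref{lem:l1}, an explicit smallness condition $m_0<c\tau$ identifying which trajectories go negative, and the observation that only $f_0+s_0>K$ (rather than $f_0,s_0\gg K$) is needed provided $m_0$ is small; the paper's version is shorter but leaves implicit the passage from ``the quasipositivity test fails at some boundary point'' to ``this particular trajectory actually crosses zero.'' The one step you rightly flag as delicate --- the uniform-in-$m_0$ window $[0,\tau]$ --- is handled adequately by your closing remarks, since while $m\ge 0$ one has $0\le m\le m_0$, so the constants in the differential inequalities governing $f$ and $f+s$ are uniform over small $m_0$.
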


\begin{proof}

Consider \eqref{ClassicTYCfeq}-\eqref{ClassicTYCseq} then clearly we have,
\begin{equation*}
F(0,m,s) = 0,~ H(f,m,0) = \mu \geq 0,
\end{equation*}
Thus, $f,s \geq 0$ for all time. However,
\begin{equation*}
G(f,0,s) = \beta fs \left( 1 - \frac{f+s}{K} \right),
\end{equation*}
and clearly for initial data $f_{0}, s_{0} \gg K$, $G(f,0,s) <0$, violating the necessary requirement for positive solutions from Lemma \ref{lem:l1}, and will yield negative solutions in the male population $m$.
\end{proof}

\subsection{Finite Time Blow Up of Solutions}

Interestingly, since it is possible to obtain negative solutions in the male population, this then could lead to the possibility of finite time blow up in the female population. Finite time blow-up has a rich history, we refer the interested reader to \cite{P10, P00, L12, PQ07}. In the forthcoming theorems we show that finite blow up is possible for large enough initial data or introduction rate of supermales. The former result is given in the following Theorem.

\begin{theorem}
\label{thm:2a}
Consider the TYC system given by \eqref{ClassicTYCfeq}-\eqref{ClassicTYCseq}, with $\mu=0$.
Then there exists positive initial data $(f_{0}, m_{0}, s_{0})$, such that solutions emanating from this data, will blow-up in finite time, that is
\begin{equation*}
\limsup_{t \rightarrow T^{*} < \infty} f \rightarrow + \infty
\end{equation*}
or
\begin{equation*}
\limsup_{t \rightarrow T^{**} < \infty} m \rightarrow - \infty
\end{equation*}
\end{theorem}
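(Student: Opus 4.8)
The plan is to exploit the negativity of $m$ furnished by Lemma \ref{lem:1a} and convert it into a Riccati-type super-linear lower bound for $f$. Concretely, I would choose positive initial data with $f_0 \gg K$ and $s_0$ chosen so that the product $f_0 s_0$ is large, and $m_0 > 0$ small. For such data $f_0 + m_0 + s_0 > K$, so the logistic factor $L = 1-(f+m+s)/K$ starts negative, and, as in the computation preceding Lemma \ref{lem:1a}, the cross term $\beta L f s$ in \eqref{ClassicTYCmeq} is large and negative; hence $m$ is driven below zero on a short initial interval while $f$ remains of order $f_0$. This places the trajectory in the regime where the blow-up mechanism operates, and I will aim to realize the first alternative, $f \to +\infty$.

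The key algebraic observation I would use is that the two logistic terms in the male equation \eqref{ClassicTYCmeq} combine, so that $\dot m = \tfrac12 \beta L f (m + 2s) - \delta m$. When $L<0$ and $f$ is large the prefactor $\tfrac12\beta L f$ is large and negative, so this equation attracts $m$ rapidly toward the value $-2s$, which is \emph{strictly negative} since $s(t) = s_0 e^{-\delta t} > 0$ (recall $\mu=0$ forces $\dot s = -\delta s$). Meanwhile, because $f$ is large, $L = 1-(f+m+s)/K \approx -f/K$, so substituting $m \approx -2s$ into \eqref{ClassicTYCfeq} gives $\dot f = f\left(\tfrac12\beta L m - \delta\right) \approx \tfrac{\beta s}{K} f^2$, a Riccati equation whose solution blows up in finite time precisely when the initial product $f_0 s_0$ is large enough.

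To make this rigorous I would set up a trapping region of the form $\Omega = \{ f \ge M,\ m \le -c,\ f+m+s > K \}$ for suitable constants $M$ and $c>0$ and verify the flow is inward on the relevant faces: on $\{f=M\}$ one has $\tfrac12\beta L m - \delta>0$ because $L<0$ and $m\le -c<0$, so $f$ increases; on $\{m=-c\}$ the combined form above gives $\dot m \le 0$ provided $s > c/2$ and $f$ is large, since then $\tfrac12\beta L f(2s-c)$ dominates the benign term $\delta c$. Inside $\Omega$ the estimate $|L| \ge (f-K)/K$ together with $m \le -c$ yields a bound of the form $\dot f \ge \alpha f^2$ for some $\alpha>0$, and a comparison with the scalar Riccati equation $\dot y = \alpha y^2$ forces $f \to +\infty$ in finite time.

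The main obstacle is the competition between two time scales: the face $\{m=-c\}$ is only inflowing while $s(t) > c/2$, but $s$ decays like $e^{-\delta t}$, so the trapping region is invariant only on a finite window $[\,t_1, t_c\,)$ with $t_c = \tfrac1\delta \ln(2s_0/c)$. I therefore have to guarantee that the Riccati blow-up time is shorter than this window. Since the blow-up time of $\dot f=\alpha f^2$ from $f(t_1)\ge M$ is at most $1/(\alpha M)$, which tends to $0$ as the data grow, the resolution is to fix $c$ first and then take $f_0$ (equivalently $M$) and $s_0$ large enough that blow-up occurs strictly before $s$ decays to $c/2$. Carefully choosing these constants, and controlling the short initial transient in which $m$ crosses zero without $f$ collapsing below $M$, is the delicate part of the argument; everything else reduces to the elementary finite-time blow-up of the comparison Riccati equation.
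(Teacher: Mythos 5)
Your proposal is correct in outline and reaches the theorem by the same basic mechanism as the paper --- invoke Lemma \ref{lem:1a} to drive $m$ negative, then extract a Riccati-type lower bound $\dot f \ge \alpha f^{2} - Cf$ and compare with the scalar blow-up equation --- but the way you certify the hypotheses of that comparison is genuinely different and, frankly, tighter. The paper simply asserts a two-sided bound $-\delta_{2} < m < -\delta_{1}$ on the interval of negativity (arguing by dichotomy that otherwise $m$ blows up trivially) and then "chooses the data large enough'' that the comparison blow-up time $T^{*}$ lands inside $[T_{1},T_{2}]$, without confronting the fact that $\delta_{1},\delta_{2}$ and $T_{2}$ themselves depend on the solution and hence on that data. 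Your trapping region $\{f\ge M,\ m\le -c\}$, together with the observation that \eqref{ClassicTYCmeq} collapses to $\dot m = \tfrac12\beta L f(m+2s)-\delta m$, replaces both assertions with checkable facts: the face $\{m=-c\}$ is inflowing while $s>c/2$, and the explicit race between the Riccati blow-up time $O(1/(\alpha M))$ and the decay window $t_{c}=\tfrac1\delta\ln(2s_{0}/c)$ is exactly the quantitative version of the paper's unexamined "$T^{*}<T_{2}$''. What your approach buys is a proof in which all constants are fixed in a non-circular order ($c$ first, then $s_{0}$ and $M$); what it costs is the extra bookkeeping of the initial transient and the region's faces. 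One loose end to tighten: inside your $\Omega$ the bound $|L|\ge (f-K)/K$ is not literally available, since $\Omega$ imposes no lower bound on $m$ and $L=1-(f+m+s)/K$ degrades if $m$ is very negative. This is repaired by your own key identity: $w=m+2s$ satisfies the linear equation $\dot w=\bigl(\tfrac12\beta Lf-\delta\bigr)w$, so $w$ never changes sign and $m>-2s\ge -2s_{0}$ for all time; adding this to the region gives $|L|\ge (f-s_{0}-K)/K$, which suffices, and it simultaneously supplies --- rigorously --- the upper bound on $|m|$ that the paper only postulates. With that sentence added, your argument is complete and arguably closes the gaps in the published proof.
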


\begin{proof}

Consider the equation for $f$ expanded, namely,
\begin{equation*}
\dot{f}  =  \frac{\beta}{2}  fm - \frac{\beta}{2K} f^{2}m - \frac{\beta}{2K} fm^{2} - \frac{\beta}{2K} fms - \delta f.
\end{equation*}
Now, by Lemma \ref{lem:1a} we know that for large initial data $s_{0}$ and $f_{0}$, say $f^{*}_{0},~s^{*}_{0} \gg K$, that $m(t) < 0$ for $t\in [T_{1}, T_{2}]$. Let
$m=-\tilde{m}$, where $\tilde{m}(t) > 0$ on $t\in [T_{1}, T_{2}]$, to yield,
\begin{equation*}
\dot{f}  =  -\frac{\beta}{2}  f \tilde{m} + \frac{\beta}{2K} f^{2}\tilde{m} - \frac{\beta}{2K} f \tilde{m}^{2} + \frac{\beta}{2K} f \tilde{m}s - \delta f.
\end{equation*}
Rearranging results in,
\begin{equation*}
\dot{f}  =   \frac{\beta}{2K} f^{2}\tilde{m} + \frac{\beta}{2K} f\tilde{m}s - \frac{\beta}{2}  f \tilde{m} - \frac{\beta}{2K} f \tilde{m}^{2} - \delta f.
\end{equation*}
Since $m < 0$ on $[T_{1}, T_{2}]$, we must have $-\delta_{2} < m < -\delta_{1}$. The upper bound is obvious, but we also have a lower bound, else the finite time blow-up of $m$ and subsequently $f$ follows trivially. Thus via standard comparison, we have,
\begin{equation*}
\dot{f}  \geq    \dot{\tilde{f}} = \frac{\beta \delta_{1}}{2K} f^{2}  - \frac{\beta \delta_{2}}{2}  f  - \frac{\beta (\delta_{2})^{2}}{2K} f  - \delta f
\end{equation*}
Clearly a solution to $\tilde{f}$ will blow-up at a finite time $T^{*}$ for sufficiently large data. That is, if
\begin{equation}
\label{f0threshold}
\tilde{f}_{0}  > \dfrac{ \dfrac{\beta \delta_{2}}{2}  + \dfrac{\beta (\delta_{2})^{2}}{2K} + \delta}{ \dfrac{\beta \delta_{1}}{2K} }
\end{equation}
Now we choose the data large enough s.t

\begin{equation*}
f_{0}=\max(\tilde{f}_{0} , f^{*}_{0} ).
\end{equation*}
This ensures that $T^{*} < T_{2}$ and since $\tilde{f}$ is a sub-solution to \eqref{ClassicTYCfeq}, this implies that $f$ blows-up in finite time. This completes the proof of the blow-up of $f$.

The method of proof for the blow-up of $m$ is similar. Essentially, by Lemma  \ref{lem:1a} we know that for large $s_{0}$ and $f_{0}$, that is say $f^{*}_{0},~s^{*}_{0} \gg K$, $m(t)<0$ on $[T_{1}, T_{2}]$. Thus if we consider the equation for $m(t)$ on $[T_{1}, T_{2}]$, we see
 \begin{equation*}
\dot{m}  =  \frac{\beta}{2}  fm + \beta fs  - \frac{\beta}{2K} f^{2}m - \frac{\beta}{2K} fm^{2} - \frac{\beta}{2K} fms  - \frac{\beta}{K} f s^{2} - \frac{\beta}{K} sf^{2} - \frac{\beta}{K} fms  - \delta m
\end{equation*}
We set $m=-\tilde{m}$, where $\tilde{m} > 0$ on $[T_{1}, T_{2}]$, to yield,
\begin{equation*}
\dot{\tilde{m}}  =  \frac{\beta}{2}  f\tilde{m} - \beta fs  - \frac{\beta}{2K} f^{2}\tilde{m} + \frac{\beta}{2K} f\tilde{m}^{2} - \frac{\beta}{2K} f\tilde{m}s  + \frac{\beta}{K} f s^{2} + \frac{\beta}{K} sf^{2} - \frac{\beta}{K} f\tilde{m}s - \delta \tilde{m}
\end{equation*}
We now assume $0 < \delta_{3} < f < \delta_{4}$, that is, we consider the case where $f$ has not blown up by some intermediate time.  The lower bound is obvious by positivity of $f$. The upper bound is guaranteed if blow up in $f$ has not occurred.  Clearly, if $f$ has blown up then $m(t)$ tends toward negative infinity in finite time.  We shall assume in the following that, indeed, $f$ has an upper bound and has not blown up.

A simple comparison using positivity now yields,
 \begin{eqnarray*}
\dot{\tilde{m}}  &\geq&  - \beta \delta_{4} s  - \frac{\beta}{2K} (\delta_{4})^{2}\tilde{m} + \frac{\beta \delta_{3}}{2K} \tilde{m}^{2} - \frac{3\beta}{2K} \delta_{4}\tilde{m}s    - \delta \tilde{m} \\
&\geq &  \frac{\beta \delta_{3}}{2K} \tilde{m}^{2}  - C_{1} \tilde{m},
\end{eqnarray*}
where $C_{1}$ is a constant that depends on the parameters in the problem. Next consider the differential equation
 \begin{equation*}
\dot{\tilde{m}}  =  \frac{\beta \delta_{3}}{2K} \tilde{m}^{2}  - C_{1} \tilde{m}
\end{equation*}
Then the solution to the above differential equation will have a solution that will blow-up at a finite time $T^{**}$ for sufficiently large data, that is, if
\begin{equation}
\label{m0threshold}
\tilde{m}_{0} > \frac{C_{1}}{ \frac{\beta \delta_{3}}{2K}}.
\end{equation}
By comparison, this implies that  $m$ will go to negative infinite in finite time. This completes the proof of the finite time blow-up of $m$.
\end{proof}

\begin{remark}
From the above proof it is not clear that the blow up time in the female and male population naturally coincide.  However, numerical results does show that this can occur.
\end{remark}

The previous theorem assumed that there was only an initial introduction of supermales through the initial condition $s(0)$.  By choosing large enough initial supermales it is clear now that the male population can be come negative and that the female or male population may blow up in finite time.  In the forthcoming theorem we turn our attention to the situation where $\mu\neq 0$, that is, the case of a constant introduction of supermales.  It will be shown that for any initial data that there exists a critical introduction rate that will lead to blow in finite time.

\begin{theorem}
\label{thm:2a1}
Consider the TYC system given by \eqref{ClassicTYCfeq}-\eqref{ClassicTYCseq}, with $\mu>0$.
For any positive initial data $(f_{0}, m_{0}, s_{0})$ large or small, there exists a critical $\mu^{*}(f_{0}, m_{0}, s_{0})$, such that for any $\mu > \mu^{*}(f_{0}, m_{0}, s_{0})$, solutions emanating from this data, will blow-up in finite time, that is
\begin{equation*}
\limsup_{t \rightarrow T^{*} < \infty} f \rightarrow + \infty
\end{equation*}
or
\begin{equation*}
\limsup_{t \rightarrow T^{**} < \infty} m \rightarrow - \infty
\end{equation*}
\end{theorem}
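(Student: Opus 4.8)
The plan is to reduce Theorem~\ref{thm:2a1} to the blow-up mechanism already isolated in Theorem~\ref{thm:2a}, using the constant introduction rate $\mu$ (rather than a large initial datum $s_0$) to manufacture an arbitrarily large supermale population. First I would solve \eqref{ClassicTYCseq} explicitly: since it decouples,
\[
s(t) = \frac{\mu}{\delta} + \left(s_0 - \frac{\mu}{\delta}\right)e^{-\delta t},
\]
so $s(t)\to \mu/\delta$, and for any prescribed level and any fixed $t_1>0$ one has $s(t)\ge \frac{\mu}{\delta}\bigl(1-e^{-\delta t_1}\bigr)$ for all $t\ge t_1$. In particular $\mu$ can be chosen so that $s(t)\gg K$ on $[t_1,\infty)$, with the transient time $t_1$ depending only on $\delta$ and the data.

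Next I would reproduce the negativity argument of Lemma~\ref{lem:1a}, now forced by $\mu$ instead of by $s_0$. Whenever $s(t)>K$ and $f(t)>0$, evaluating \eqref{ClassicTYCmeq} at $m=0$ gives
\[
G(f,0,s) = \beta f s\left(1 - \frac{f+s}{K}\right) < 0,
\]
since $f+s>s>K$. Hence $m$ cannot cross zero upward while $s>K$: once driven to zero it is pushed strictly negative and, by the same sign, stays negative while $s$ is large. Together with the observation that for large $s$ the forcing term $\beta Lfs$ in \eqref{ClassicTYCmeq} is strongly negative (as $L<0$ whenever $f+m+s>K$), this produces a time interval $[T_1,T_2]\subset(t_1,\infty)$ on which $m(t)<0$, exactly as in Lemma~\ref{lem:1a} but independent of the size of $s_0$.

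On $[T_1,T_2]$ I would then run the two comparison arguments of Theorem~\ref{thm:2a}. Writing $m=-\tilde m$ with $\tilde m>0$, and using two-sided bounds $\delta_1<\tilde m<\delta_2$ and $\delta_3<f<\delta_4$ on a subinterval, the expanded equations furnish the sub-solutions
\[
\dot{\tilde f} = \frac{\beta\delta_1}{2K}\tilde f^{2} - \Bigl(\frac{\beta\delta_2}{2} + \frac{\beta\delta_2^{2}}{2K} + \delta\Bigr)\tilde f, \qquad \dot{\tilde m} = \frac{\beta\delta_3}{2K}\tilde m^{2} - C_1\tilde m,
\]
which blow up once the initial values exceed the thresholds \eqref{f0threshold} and \eqref{m0threshold}. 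The role of $\mu$ is now quantitative: increasing $\mu$ increases $s$ on $[T_1,T_2]$, which through $\beta Lfs$ drives $m$ more negative (enlarging $\tilde m$) and, while $L<0$ and $m<0$, makes $\tfrac\beta2 Lfm>0$ so that $f$ grows. I would define $\mu^{*}(f_0,m_0,s_0)$ to be the least $\mu$ for which the values attained by $\tilde m$ and $f$ on $[T_1,T_2]$ exceed the respective thresholds before the interval ends, so that $\tilde f$ (resp.\ $\tilde m$) is a genuine sub-solution and $f\to+\infty$ or $m\to-\infty$ in finite time.

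The main obstacle is the coupling that is suppressed when one simply posits the bounds in the two comparisons. Because $s$ is large precisely where $L$ is strongly negative, the constants $\delta_3,\delta_4$ and $C_1$ implicitly depend on $s$, and one must show that $\delta_3<f<\delta_4$ actually persists on a subinterval \emph{long enough} for the quadratic sub-solution to reach infinity — in particular that $f$ neither vanishes (as it tends to once $m$ is driven past $K-f-s$ into the region $L>0$, where $\dot f<0$) nor leaves the comparison regime prematurely, and that the window in which $m<0$ with favourable sign structure outlasts the comparison blow-up time. Making these bounds, and the resulting $\mu^{*}$, explicit in $(f_0,m_0,s_0)$ rather than merely asserting their existence is the delicate part; everything else is a direct transcription of Theorem~\ref{thm:2a}.
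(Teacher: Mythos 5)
Your overall framing (large $\mu$ forces $s\to\mu/\delta>K$, hence $L<0$, hence $m<0$, then run the comparison machinery of Theorem \ref{thm:2a}) matches the paper's first step, but the way you let $\mu$ enter the blow-up argument diverges from the paper and leaves a genuine gap. You discard the term $\frac{\beta}{2K}f\tilde m s$ and fall back on the sub-solution $\dot{\tilde f}=\frac{\beta\delta_1}{2K}\tilde f^{2}-\bigl(\frac{\beta\delta_2}{2}+\frac{\beta\delta_2^{2}}{2K}+\delta\bigr)\tilde f$, which blows up only if its initial value exceeds the large-data threshold \eqref{f0threshold}. To cover small initial data you then assert that increasing $\mu$ ``drives $m$ more negative'' and ``makes $f$ grow'' until the orbit enters the large-data regime of Theorem \ref{thm:2a} before $[T_1,T_2]$ ends. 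That dynamical bootstrapping claim is exactly the content of the theorem for small data, and you give no estimate showing $f$ actually reaches the threshold \eqref{f0threshold} within the window --- you concede as much in your last paragraph. As written, your argument establishes blow-up only for data that is already large, which is Theorem \ref{thm:2a} again.

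The paper's proof avoids this entirely by keeping the term you dropped. After the substitution $m=-\tilde m$ one has
\[
\dot f=\frac{\beta}{2K}f^{2}\tilde m+\frac{\beta}{2K}f\tilde m s-\frac{\beta}{2}f\tilde m-\frac{\beta}{2K}f\tilde m^{2}-\delta f,
\]
and since $s$ is comparable to its steady state $\mu/\delta$, the second term contributes a \emph{positive linear} term in $f$ whose coefficient is proportional to $\mu$. Choosing $\mu>\mu^{*}$ as in \eqref{muthreshold} makes this term dominate all the negative linear (damping) terms, leaving $\dot f\ge \frac{\beta\delta_2}{2K}f^{2}$, and a Riccati inequality of this form blows up in finite time from \emph{any} positive initial value. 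That is the missing idea: $\mu$ should be used to cancel the damping in the differential inequality itself, not to grow the solution past a large-data threshold. To salvage your route you would need a quantitative lower bound on the growth of $f$ over $[T_1,T_2]$ strong enough to reach \eqref{f0threshold}, which is considerably harder than the one-line substitution $s\approx\mu/\delta$ that the paper uses.
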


\begin{proof}

We first choose $\mu > \delta K$.  This guarantees that
\begin{equation*}
G(f,0,s) = fs\left(1-\frac{f+s}{K}\right) < 0
\end{equation*}
for any $f_{0}$, and so following the methods of Theorem \ref{thm:2a} we have negative solutions for $m$. Following the same estimates as in Theorem \ref{thm:2a} we arrive at:
\begin{equation*}
\dot{f}  =   \frac{\beta}{2K} f^{2}\tilde{m} + \frac{\beta}{2K} f\tilde{m}s - \frac{\beta}{2}  f \tilde{m} - \frac{\beta}{2K} f \tilde{m}^{2} - \delta f.
\end{equation*}
Now using the steady state solution of $s$, we have that
\begin{equation*}
\dot{f}  \geq   \frac{\beta}{2K} f^{2}\delta_{2} + \frac{\beta}{2K} f\delta_{2}\frac{\mu}{\delta} - \frac{\beta}{2}  f \tilde{m} - \frac{\beta}{2K} f \tilde{m}^{2} - \delta f.
\end{equation*}
This yields
\begin{equation*}
\dot{f}  \geq   \frac{\beta}{2K} f^{2}\delta_{2} + \frac{\beta}{2K} \delta_{2}\frac{\mu}{\delta} f - \left( \frac{\beta \delta_{2}}{2}  +\frac{\beta (\delta_{2})^{2} + \delta}{2K}  \right)f.
\end{equation*}
Now if
\begin{equation}
\label{muthreshold}
\mu^{*} = \frac{\left( \frac{\beta \delta_{2}}{2}  +\frac{\beta (\delta_{2})^{2} + \delta}{2K}  \right)}{\left( \frac{\beta \delta_{2}}{2K\delta}\right)},
\end{equation}
then for any $\mu > \mu^{*}$ we obtain
\begin{equation*}
\dot{f}  \geq  \left( \frac{\beta \delta_{2} }{2K} \right)f^{2}.
\end{equation*}
This fact leads to the finite time blow-up of $f$ for any positive initial condition. The blow-up of $m$ can be shown similarly.
\end{proof}

\begin{remark}
Note via \eqref{muthreshold}, $\mu^{*}$ depends on the parameters in the problem, as well as $\delta_{2}$ - which in turn depends on the initial conditions $(f_{0}, m_{0}, s_{0})$.
\end{remark}

\section{Finite Time Blow-Up in the Partial Differential Equations Model}
\label{PDE}
The disadvantageous behavior observed in the previous section may be attenuated in the presence of spatial pressures.  Here, we examine the spatially explicit version of the three species TYC model and prove the possibility of finite time blow-up in the female or male population. To this end we use standard techniques \cite{L12, PQ07, P00, P10} Consider the partial differential equations (PDE) model:
\begin{eqnarray}
\label{Cpf} \frac{\partial f}{\partial t} &=& \Delta f + \frac{1}{2}  \beta L fm - \delta f ,\\
\label{Cpm} \frac{\partial m}{\partial t}  &=& \Delta m + \frac{1}{2}  \beta L fm + \beta L fs- \delta m,\\
\label{Cps}\frac{\partial s}{\partial t} &=&\Delta s + \mu - \delta s ,
\end{eqnarray}
specified over the domain $(x,t) \subset \Omega \times (0,\infty)$ and subject to homogeneous Neumann boundary conditions on the boundary $\partial \Omega$, that is,
\begin{equation*}
 \nabla f \cdot n = \nabla m \cdot n = \nabla s \cdot n = 0,
\end{equation*}
where $L$, $\beta$, $\delta$, and $\mu$ are as defined previously.  Again, the parameters, $\beta$, $\delta$, and $\mu$ are assumed nonnegative. Positive initial data, $(f(x,0), m(x,0), s(x,0))$, is assumed herein.  We first show the possibility of finite time blow in the female or male populations in the situation for large enough initial data where $\mu = 0$.  We then show, regardless of the initial condition size, that there exists a critical introduction rate of supermales that leads to finite time blow-up.  Hence, even in the case of diffusion, the results of the previous section permeate into the PDE model.

\begin{theorem}
\label{thm:3a}
Consider the TYC system given by \eqref{Cpf}-\eqref{Cps}, with $\mu=0$.
Then there exists positive initial data $(f(x,0), m(x,0), s(x,0))$, such that solutions emanating from this data, can blow-up in finite time, that is
\begin{equation*}
\limsup_{t \rightarrow T^{*} < \infty} ||f||_{p} \rightarrow + \infty
\end{equation*}
and
\begin{equation*}
\limsup_{t \rightarrow T^{**} < \infty} ||m||_{p}  \rightarrow  \infty
\end{equation*}
for all $p\geq1$.

\end{theorem}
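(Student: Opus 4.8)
The plan is to reduce the PDE problem to a scalar Riccati-type differential inequality for the spatial mass $U(t) = \int_\Omega f(x,t)\,dx$, exploiting the homogeneous Neumann boundary conditions. Since $\int_\Omega \Delta f\,dx = \int_{\partial\Omega}\nabla f\cdot n\,dS = 0$, integrating \eqref{Cpf} over $\Omega$ annihilates the diffusion term and leaves
\begin{equation*}
\frac{dU}{dt} = \frac{\beta}{2}\int_\Omega L f m\,dx - \delta\int_\Omega f\,dx .
\end{equation*}
Because $\Omega$ is bounded, H\"older's inequality gives $\|f\|_p \geq |\Omega|^{-(1-1/p)}\|f\|_1 = |\Omega|^{-(1-1/p)}U$, so it suffices to show that $U(t)\to+\infty$ in finite time; the analogous statement for $m$ will follow from the symmetric argument applied to \eqref{Cpm}.

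First I would establish the spatial analogue of Lemma \ref{lem:1a}, namely that $m$ becomes negative on a space--time region. With $\mu=0$, equation \eqref{Cps} is the linear decaying heat equation $\partial_t s = \Delta s - \delta s$; setting $v=e^{\delta t}s$ reduces it to the Neumann heat equation, so by the maximum principle $s(x,t)\geq e^{-\delta t}\min_x s(x,0)$ stays large on some interval $[0,T_2]$ when $s(x,0)\gg K$, and a lower bound on $f$ follows from positivity. At any point where $m=0$ the reaction in \eqref{Cpm} reduces to $\beta L f s = \beta f s\bigl(1-(f+s)/K\bigr)$, which is strictly negative once $f+s>K$, and a comparison/subsolution argument then forces $m<0$ on an interval $[T_1,T_2]$, giving $-\delta_2 < m < -\delta_1 < 0$ exactly as in the ODE proof. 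The cleanest way to make this rigorous is to choose spatially homogeneous initial data: for such data the unique solution of \eqref{Cpf}--\eqref{Cps} remains spatially constant for all time (the Laplacians vanish identically and the reaction preserves homogeneity), so it coincides with the ODE solution and Lemma \ref{lem:1a} applies verbatim.

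With $m=-\tilde m$ and $\delta_1\le\tilde m\le\delta_2$ on $[T_1,T_2]$, I would expand $\int_\Omega L f m\,dx$ as in the proof of Theorem \ref{thm:2a}. The dominant contribution is the positive term $\tfrac{\beta}{2K}\int_\Omega f^2\tilde m\,dx$, which I bound below using $\tilde m\ge\delta_1$ together with the Cauchy--Schwarz (Jensen) inequality $\int_\Omega f^2\,dx \ge |\Omega|^{-1}\bigl(\int_\Omega f\,dx\bigr)^2$, yielding $\tfrac{\beta}{2K}\int_\Omega f^2\tilde m\,dx \ge \tfrac{\beta\delta_1}{2K|\Omega|}U^2$. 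All remaining terms are at most linear in $f$, with coefficients controlled by $\delta_1\le\tilde m\le\delta_2$ and the lower bound $\delta_3\le f$, so they are dominated by $-C\,U$ for a constant $C$ depending only on the parameters. This produces the Riccati inequality
\begin{equation*}
\frac{dU}{dt} \ \ge\ \frac{\beta\delta_1}{2K|\Omega|}\,U^2 - C\,U ,
\end{equation*}
whose solutions blow up in finite time $T^*<T_2$ once $U(0)$ exceeds the threshold $2K|\Omega|C/(\beta\delta_1)$, in direct analogy with \eqref{f0threshold}. Choosing the (homogeneous) initial data large enough then forces $\|f\|_p\to+\infty$, and the parallel computation starting from \eqref{Cpm} gives $\|m\|_p\to\infty$.

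I expect the main obstacle to be the negativity step in the presence of diffusion: unlike the ODE, the Laplacian in \eqref{Cpm} redistributes $m$ and can counteract the pointwise sign change produced by the reaction, so a naive pointwise argument need not keep $m<0$ uniformly on $\Omega$. Restricting to spatially homogeneous data removes this difficulty entirely and already suffices to prove the ``there exists initial data'' statement; for genuinely inhomogeneous data one would instead work on a subdomain $\omega\subset\Omega$ on which $f+s>K$ is maintained and control the diffusive flux across $\partial\omega$, which is the only technically delicate point in upgrading the argument.
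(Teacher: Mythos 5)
Your proposal follows essentially the same route as the paper's proof: integrate \eqref{Cpf} over $\Omega$ so that the homogeneous Neumann condition annihilates the Laplacian, use the negativity of $m$ on $[T_1,T_2]$ (with the bounds $-\delta_2<m<-\delta_1$) together with H\"older/Jensen to obtain a Riccati inequality for $\int_\Omega f\,dx$, and conclude blow-up of the $L^1$ norm, hence of every $L^p$ norm, for sufficiently large initial data. Your version is in fact slightly more careful than the paper's, which cites the ODE Lemma \ref{lem:1a} in the PDE setting without justification and drops the $|\Omega|^{-1}$ constant in the Jensen step; your reduction to spatially homogeneous initial data cleanly fills the first of these gaps.
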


\begin{proof}

Consider the equation for the female population expanded:
\begin{equation*}
\frac{\partial f}{\partial t}   = \Delta f +  \frac{\beta}{2}  fm - \frac{\beta}{2K} f^{2}m - \frac{\beta}{2K} fm^{2} - \frac{\beta}{2K} fms - \delta f.
\end{equation*}
Via Lemma \ref{lem:1a} we know that for large $s(x,0)$ and $f(x,0)$, that is $s(x,0),f(x,0) \gg K$, that $m(x,t)<0$ for $t\in [T_{1}, T_{2}]$ and $x \in \Omega$. Let $m=-\tilde{m}$, where $\tilde{m} > 0$ for $t\in [T_{1}, T_{2}]$ and $x \in \Omega$. By direct substitution,
\begin{equation*}
\frac{\partial f}{\partial t}  = \Delta f  -\frac{\beta}{2}  f \tilde{m} + \frac{\beta}{2K} f^{2}\tilde{m} - \frac{\beta}{2K} f \tilde{m}^{2} + \frac{\beta}{2K} f \tilde{m}s - \delta f.
\end{equation*}
Integrating over the spatial domain $\Omega$ and rearranging yields,
\begin{eqnarray*}
\frac{d}{dt}\int_{\Omega} f\ dx  &=&   \frac{\beta}{2K}\int_{\Omega} f^{2}\tilde{m}\ dx + \frac{\beta}{2K}\int_{\Omega} f\tilde{m}s\ dx - \frac{\beta}{2} \int_{\Omega} f \tilde{m}\ dx  \\
& &- \frac{\beta}{2K} \int_{\Omega}f \tilde{m}^{2}\ dx- \delta \int_{\Omega}f\ dx.
\end{eqnarray*}
Since $m(x,t)<0$ for $(x,t) \in \Omega \times [T_1, T_2]$ then $-\delta_2 < m(x,t) < -\delta_1$, for positive constants $\delta_1$ and $\delta_2$.  Thus via standard comparison as earlier and H\"{o}lder's inequality we have,
\begin{equation*}
\frac{d}{dt}\int_{\Omega} f dx  \geq    \frac{\beta \delta_{1}}{2K}\left(\int_{\Omega} f dx\right)^{2} -  \left( \frac{\beta \delta_{2}}{2}  +\frac{\beta (\delta_{2})^{2} + \delta}{2K}  \right)\int_{\Omega} f dx,
\end{equation*}
Define $F(t)=\displaystyle \int_{\Omega} f dx$, then,
\begin{equation*}
\frac{d}{dt}F(t)  \geq    \frac{\beta \delta_{1}}{2K}\left(F(t)\right)^{2} -  \left( \frac{\beta \delta_{2}}{2}  +\frac{\beta (\delta_{2})^{2} + \delta}{2K}  \right)F(t)
\end{equation*}
which yields the finite time blow-up of $F(t)$, for large enough initial data. That is for

\begin{equation*}
F(0)  = \int_{\Omega} f(x,0)  dx  \geq   \frac{\left( \frac{\beta \delta_{2}}{2}  +\frac{\beta (\delta_{2})^{2} + \delta}{2K}  \right)}{\left( \frac{\beta \delta_{2}}{2K\delta}\right)}
\end{equation*}

Thus the $L^{1}(\Omega)$ norm of $f$ blows-up in finite time. Since $L^{p}(\Omega) \hookrightarrow L^{1}(\Omega)$, for $p \geq 1$, we have that the $L^{p}$ norm of $f$ blows up for any $p$, for large enough initial conditions.
This completes the proof of the blow-up of $f$.  A similar proof for the blow up in the male population can be established.
\end{proof}

The previous theorem proves that the finite time blow up is a possibility even if the only introduction of supermales is through the initial condition.  In the following theorem we prove that regardless of the initial condition size that there exists a threshold to the introduction rate, $\mu$, such that rates beyond this value will lead to finite time blow in the female population.

\begin{theorem}
\label{thm:3apd}
Consider the TYC system given by \eqref{Cpf}-\eqref{Cps}, with $\mu>0$.
Then for any positive initial data $(f(x,0), m(x,0), s(x,0))$, there exists a $\mu^{*}$ such that if $\mu > \mu^{*}$, then solutions emanating from this data, can blow-up in finite time, that is
\begin{equation*}
\limsup_{t \rightarrow T^{*} < \infty} ||f||_{p} \rightarrow + \infty
\end{equation*}
and
\begin{equation*}
\limsup_{t \rightarrow T^{**} < \infty} ||m||_{p}  \rightarrow \infty
\end{equation*}
for all $p\geq1$.
\end{theorem}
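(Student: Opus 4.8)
The plan is to transplant the integration-and-comparison scheme of Theorem \ref{thm:3a} onto the mechanism of Theorem \ref{thm:2a1}, so that the threshold on the initial mass $F(0)=\int_\Omega f(x,0)\,dx$ is traded for a threshold on the introduction rate $\mu$. The guiding idea is that a large $\mu$ plays a double role: it both forces $m$ negative (as in Lemma \ref{lem:1a}) and contributes a positive linear term to the evolution of $\int_\Omega f\,dx$ that, once $\mu$ is large, overwhelms every negative linear contribution and leaves a pure Riccati inequality $\dot F\geq cF^2$, which blows up from arbitrary positive data.

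First I would establish a $\mu$-dependent lower bound on $s$. Since \eqref{Cps} is linear with Neumann data, comparison with the spatially constant profile solving $\dot v=\mu-\delta v$, $v(0)=\min_x s(x,0)\geq 0$, gives the pointwise estimate $s(x,t)\geq \tfrac{\mu}{\delta}\bigl(1-e^{-\delta t}\bigr)$. Hence, fixing any $T_1>0$, we obtain $s(x,t)\geq c_0\mu$ on $\Omega\times[T_1,\infty)$ with $c_0=\tfrac{1}{\delta}(1-e^{-\delta T_1})>0$. For $\mu$ large this already guarantees $f+s>K$ everywhere, so $G(f,0,s)=\beta f s\,(1-(f+s)/K)<0$ on the face $m=0$; invoking the positivity criterion exactly as in the proof of Theorem \ref{thm:3a} then yields an interval $[T_1,T_2]$ on which $m(x,t)<0$, and I set $m=-\tilde m$ with $\delta_1<\tilde m<\delta_2$.

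Next I would substitute $m=-\tilde m$ into the expanded $f$-equation, integrate over $\Omega$, and use the Neumann condition to annihilate the Laplacian ($\int_\Omega\Delta f\,dx=0$). Retaining the positive cubic term, inserting $\tilde m\geq\delta_1$ and $s\geq c_0\mu$ there, bounding the negative terms by $\tilde m\leq\delta_2$, and applying Jensen's inequality $\int_\Omega f^2\,dx\geq|\Omega|^{-1}F(t)^2$ to the quadratic term should produce
\begin{equation*}
\frac{d}{dt}F(t)\;\geq\;\frac{\beta\delta_1}{2K|\Omega|}\,F(t)^2\;+\;\left(\frac{\beta\delta_1 c_0}{2K}\,\mu-C_1\right)F(t),
\end{equation*}
where $C_1=\tfrac{\beta\delta_2}{2}+\tfrac{\beta\delta_2^2}{2K}+\delta$ gathers the negative linear coefficients. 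Choosing $\mu^{*}=2KC_1/(\beta\delta_1 c_0)$ makes the bracket nonnegative for every $\mu>\mu^{*}$, leaving $\tfrac{d}{dt}F\geq\tfrac{\beta\delta_1}{2K|\Omega|}F^2$, which blows up in finite time for \emph{any} $F(0)>0$. As in Theorem \ref{thm:3a}, finite-time blow-up of $\|f\|_{1}$ forces $\|f\|_{p}\to\infty$ for all $p\geq 1$ via $L^{p}(\Omega)\hookrightarrow L^{1}(\Omega)$, and the blow-up of $m$ follows from the analogous estimate applied to the $m$-equation.

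The step I expect to be the main obstacle is the circularity hidden in $\delta_2$: a larger $\mu$ drives $m$ more negative through the term $\beta L f s$, so a priori the upper bound $\delta_2$ on $|m|$ itself grows with $\mu$, and since $C_1$ contains $\delta_2^2$ the threshold $\mu^{*}$ could chase a moving target. I would resolve this by fixing the interval $[T_1,T_2]$ immediately after $m$ first crosses zero, where $|m|$ is still $O(1)$, so that $\delta_1,\delta_2$ (hence $C_1$) are pinned down \emph{before} $\mu$ is enlarged; one must then check that the boosted growth rate makes the blow-up time $T^{*}$ produced by the scalar inequality fall inside $[T_1,T_2]$. Because the positive linear coefficient scales like $\mu$, the early exponential ramp-up of $F$ is as fast as desired, which is what lets the blow-up occur within the fixed window irrespective of how small the initial data is; making this timing quantitative is the delicate part of the argument.
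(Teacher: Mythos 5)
Your proposal follows essentially the same route as the paper's proof: integrate the $f$-equation over $\Omega$ (killing the Laplacian via the Neumann condition), use H\"{o}lder/Jensen to convert $\int_\Omega f^2\,dx$ into $(\int_\Omega f\,dx)^2$, and take $\mu$ large enough that the positive linear term generated by $s\approx\mu/\delta$ absorbs all negative linear contributions, leaving a pure Riccati inequality that blows up from any positive initial mass. If anything your write-up is more careful than the paper's two-line argument --- the explicit comparison bound $s\geq\tfrac{\mu}{\delta}\bigl(1-e^{-\delta t}\bigr)$, the consistent use of the lower bound $\delta_1$ on $\tilde m$ (the paper writes $\delta_2$ there), and the flagged circularity of $\delta_2$ and the blow-up-time window are all points the paper's proof glosses over.
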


\begin{proof}

Let $\mu > \delta K$.  Following similar estimates as in the previous theorem yields,
\begin{equation*}
\frac{d}{dt}\int_{\Omega} f\ dx  =   \frac{\beta}{2K}\int_{\Omega} f^{2}\tilde{m}\ dx + \frac{\beta}{2K}\int_{\Omega} f\tilde{m}s dx - \frac{\beta}{2} \int_{\Omega} f \tilde{m}dx - \frac{\beta}{2K} \int_{\Omega}f \tilde{m}^{2}dx - \delta \int_{\Omega}fdx.
\end{equation*}
Via Holder's inequality we have,
\begin{equation*}
\frac{d}{dt}\int_{\Omega} f dx  \geq    C_{1}\frac{\beta \delta_{1}}{2K}\left(\int_{\Omega} f dx\right)^{2} +  \frac{\beta}{2K}\int_{\Omega} f\delta_{2}\frac{\mu}{\delta} dx -  \left( \frac{\beta \delta_{2}}{2}  +\frac{\beta (\delta_{2})^{2} + \delta}{2K}  \right)\int_{\Omega} f dx
\end{equation*}
The result follows for
\begin{equation}
\label{muthresholdPDE}
\mu > \dfrac{ \left( \dfrac{\beta \delta_{2}}{2}  +\dfrac{\beta (\delta_{2})^{2} + \delta}{2K}  \right)}{\dfrac{\beta \delta_{}2}{2K\delta}}
\end{equation}
\end{proof}

\begin{remark} The thresholds provided in the current and present section, namely  \eqref{f0threshold}, \eqref{m0threshold}, \eqref{muthreshold}, and \eqref{muthresholdPDE} are not guaranteed to be sharp.  In fact, numerical experiments suggest the critical values of initial condition size or introduction may be much smaller to yield negative solutions or finite time blow-up.
\end{remark}

This section's results motivate the following corollaries concerning the classical four species TYC model.

\begin{corollary}
Consider the TYC system given by \eqref{TYCOrgFeq}-\eqref{TYCOrgReq}, with $\mu=0$.
Then there exists positive initial data $(f(x,0), m(x,0), s(x,0), r(x,0))$, such that solutions emanating from this data, can blow-up in finite time, that is
\begin{equation*}
\limsup_{t \rightarrow T^{*} < \infty} ||f||_{p} \rightarrow + \infty
\end{equation*}
and
\begin{equation*}
\limsup_{t \rightarrow T^{**} < \infty} ||m||_{p}  \rightarrow  \infty
\end{equation*}
for all $p\geq1$.
\end{corollary}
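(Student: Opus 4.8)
The plan is to reduce this four-species corollary to the PDE analysis already carried out for the three-species model \eqref{Cpf}-\eqref{Cps} in Theorem \ref{thm:3a}, exploiting the structural fact that the female equation \eqref{TYCOrgFeq} is essentially identical in both models and, crucially, does not involve the sex-reversed class $r$. The first step is to observe that with $\mu=0$ the $r$-equation \eqref{TYCOrgReq} decouples into the linear problem $\partial_t r = D\Delta r - \delta r$; under the homogeneous Neumann condition and nonnegative initial data, the parabolic maximum principle guarantees $r(x,t)\geq 0$ for all $t$. Since the logistic factor is $L = 1-(f+m+s)/K$, the variable $r$ never enters \eqref{TYCOrgFeq} at all, so the evolution of $f$ is insulated from the extra class.

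Next I would establish the four-species analog of Lemma \ref{lem:1a}. Setting $m=0$ in the reaction part of \eqref{TYCOrgMeq} gives $G(f,0,s,r) = \beta f s\bigl(1-(f+s)/K\bigr)$, which is exactly the expression that appears in Lemma \ref{lem:1a} (the terms $\tfrac12 rm$ and $\tfrac12 fm$ both vanish at $m=0$). Hence for positive initial data with $f(x,0),\,s(x,0)\gg K$, the necessary condition of Lemma \ref{lem:l1} is violated, and the male population $m(x,t)$ becomes negative on a space-time region $\Omega\times[T_1,T_2]$ with $T_2>T_1>0$.

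With $m<0$ secured, the argument of Theorem \ref{thm:3a} transfers essentially verbatim. I would set $m=-\tilde m$ with $\tilde m>0$ on $[T_1,T_2]$, substitute into the expanded female equation, integrate over $\Omega$, and use the Neumann condition to annihilate the diffusion term, $D\int_{\Omega}\Delta f\,dx = 0$ (the value of $D$ being immaterial). Using the bounds $-\delta_2 < m < -\delta_1$ valid on $[T_1,T_2]$ together with H\"older's inequality then yields the Riccati differential inequality
\[ \frac{d}{dt}F(t) \geq \frac{\beta \delta_{1}}{2K}\,F(t)^{2} - C\,F(t), \qquad F(t) = \int_{\Omega} f\,dx, \]
whose solution blows up in finite time once $F(0)=\int_\Omega f(x,0)\,dx$ exceeds a threshold analogous to \eqref{f0threshold}. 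Choosing the initial data large enough to simultaneously force $m<0$ and exceed this threshold drives $\|f\|_1$, and hence $\|f\|_p$ for every $p\geq 1$ via the embedding $L^p(\Omega)\hookrightarrow L^1(\Omega)$ on the bounded domain, to $+\infty$ in finite time; blow-up of $\|m\|_p$ follows from the symmetric estimate used for the male population in Theorem \ref{thm:3a}.

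The main obstacle is not the $f$-dynamics, which are untouched by the additional class $r$, but rather justifying the negativity step rigorously in the PDE setting: one must ensure $m$ dips below zero on a region of positive measure over an interval of positive length, and that on that region the lower bound $\delta_1$ on $-m$ genuinely exists (if it does not, blow-up of $m$ is immediate, which already suffices). The extra coupling term $\tfrac12\beta L\,rm$ in \eqref{TYCOrgMeq}, absent from the three-species model, does not interfere: on the interval where $m<0$ it contributes only to the evolution of $m$, never to that of $f$, and its effect is absorbed into the constants $\delta_1,\delta_2,C$. Consequently the four-species corollary is a direct consequence of the three-species PDE result, and an entirely parallel statement would hold in the $\mu>0$ regime by mirroring Theorem \ref{thm:3apd}.
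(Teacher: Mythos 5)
Your reduction is exactly the route the paper takes: the corollary is stated there without a separate proof, as an immediate consequence of Theorem \ref{thm:3a}, for precisely the reasons you give --- $r$ enters neither the logistic factor $L$ nor the $f$-equation, and $G(f,0,s,r)=\beta fs\left(1-(f+s)/K\right)$ is unchanged from the three-species case, so the negativity of $m$ and the Riccati inequality for $F(t)=\int_\Omega f\,dx$ carry over verbatim. The one point you (and the paper) leave implicit is that in the four-species model $s$ obeys \eqref{TYCOrgSeq} rather than $\partial_t s=\mu-\delta s$, so the nonnegativity of $s$ used to discard the $+\tfrac{\beta}{2K}f\tilde{m}s$ term deserves a word of justification.
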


\begin{corollary}
Consider the TYC system given by \eqref{TYCOrgFeq}-\eqref{TYCOrgReq}, with $\mu>0$.
Then for any positive initial data $(f(x,0), m(x,0), s(x,0), r(x,0))$, there exists a $\mu^{*}$ such that if $\mu > \mu^{*}$, then solutions emanating from this data, can blow-up in finite time, that is
\begin{equation*}
\limsup_{t \rightarrow T^{*} < \infty} ||f||_{p} \rightarrow + \infty
\end{equation*}
and
\begin{equation*}
\limsup_{t \rightarrow T^{**} < \infty} ||m||_{p}  \rightarrow  \infty
\end{equation*}
for all $p\geq1$.
\end{corollary}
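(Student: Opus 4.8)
The plan is to reduce the four--species problem to the scalar Riccati argument already used in Theorems \ref{thm:3a} and \ref{thm:3apd}. The decisive structural observation is that the female equation \eqref{TYCOrgFeq} carries exactly the same reaction nonlinearity $\tfrac12\beta L f m$ as \eqref{Cpf}, the only difference being the diffusion constant $D$; since $\int_\Omega D\Delta f\,dx=0$ under homogeneous Neumann boundary conditions, this difference is immaterial. Thus, once $m$ is known to be negative on an interval $[T_1,T_2]$ with $-\delta_2<m<-\delta_1$, the substitution $m=-\tilde m$ expands $\tfrac12\beta L f m$ term--for--term into the expression treated before, and integrating over $\Omega$ together with H\"older's inequality yields
\begin{equation*}
\frac{d}{dt}F(t)\ \ge\ \frac{\beta\delta_1}{2K}\,F(t)^2-\Big(\frac{\beta\delta_2}{2}+\frac{\beta(\delta_2)^2+\delta}{2K}\Big)F(t),\qquad F(t)=\int_\Omega f\,dx,
\end{equation*}
whose right--hand side is eventually dominated by its quadratic term, forcing $F$, hence $\|f\|_p$ for every $p\ge1$, to blow up in finite time; the blow--up of $\|m\|_p$ then follows by the companion comparison. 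Everything therefore hinges on producing the interval on which $m<0$, which by the nonnegativity criterion of Lemma \ref{lem:l1} requires the male reaction evaluated at $m=0$, namely $G(f,0,s,r)=\beta f s\,(1-(f+s)/K)$, to become negative, i.e.\ $f+s>K$.

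Where the injection rate enters is the $r$--equation \eqref{TYCOrgReq}, which is linear and completely decoupled: under Neumann conditions $r(x,t)=\mu/\delta+(\text{exponentially decaying transient})$, so $r\to\mu/\delta$ uniformly and can be made arbitrarily large on any interval $[t_0,\infty)$ by enlarging $\mu$. With $r$ large, the sources $\tfrac12 rm\,\beta L$ in \eqref{TYCOrgMeq} and $(\tfrac12 rm+rs)\beta L$ in \eqref{TYCOrgSeq} are strongly amplified. The intended mechanism is then: in a regime where $f+m+s>K$, so that $L<0$, the term $\tfrac12 rm\,\beta L$ acts on \eqref{TYCOrgMeq} as a large negative multiple of $m$, which together with the source $f s\,\beta L<0$ drives $m$ downward through zero on a finite interval; once $m<0$ with bounds $\delta_1,\delta_2$, the Riccati blow--up above applies and the threshold $\mu^{*}$ is read off as in \eqref{muthreshold}--\eqref{muthresholdPDE}, depending on the data through $\delta_1,\delta_2$.

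The hard part --- and the step I expect to be the genuine obstacle --- is precisely the breaching of carrying capacity, because in the four--species model the injection feeds the class $r$, which does \emph{not} appear in the logistic factor $L=1-(f+m+s)/K$. In the three--species proofs of Theorems \ref{thm:3a} and \ref{thm:3apd}, $\mu$ entered the \emph{$L$--free} supermale equation \eqref{Cps}, so the regulated total $N=f+m+s$ obeyed $\dot N=\beta L f(m+s)-\delta N+\mu$ and $\mu>\delta K$ gave $\dot N\big|_{N=K}=\mu-\delta K>0$, letting $N$ exceed $K$ and making $L<0$ accessible for any datum. For the four--species system the same computation gives instead
\begin{equation*}
\frac{d}{dt}\!\int_\Omega N\,dx=\beta\!\int_\Omega L\,(f+r)(m+s)\,dx-\delta\!\int_\Omega N\,dx,\qquad N=f+m+s,
\end{equation*}
whose boundary behavior at $N\equiv K$ is $-\delta K$ \emph{independently of $\mu$}; together with the Neumann maximum principle this makes $\{N\le K\}$ forward--invariant. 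Consequently, for strictly sub--capacity initial data large $\mu$ alone cannot render $L<0$, so $m$ cannot be driven negative by this route. To establish the corollary one therefore cannot simply copy the three--species estimate: the realistic path is to assume the datum already breaches capacity, $f_0+s_0>K$ (equivalently $N(x,0)>K$ on a set of positive measure), whence $L<0$ initially, the amplified damping $\tfrac12 rm\,\beta L$ with large $r$ crashes $m$ below zero, and the reduction of the first paragraph closes the argument. Making the statement hold for \emph{every} positive datum, rather than only for data at or above capacity, is in my view the crux that would require additional structure to resolve.
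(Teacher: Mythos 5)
The paper states this corollary without proof --- it is merely asserted as being ``motivated'' by Theorems \ref{thm:3a} and \ref{thm:3apd} --- so there is no official argument to compare yours against line by line. Your reduction of the blow-up step to the Riccati comparison for $F(t)=\int_\Omega f\,dx$ is exactly the mechanism of those theorems, and that part is fine. The substantive content of your proposal is the obstruction you identify, and you are right about it: in the four-species system the injection $\mu$ feeds $r$ via \eqref{TYCOrgReq}, and $r$ does not appear in $L=1-(f+m+s)/K$, whereas in the three-species system $\mu$ feeds $s$ via \eqref{Cps}, which does. Summing the reaction terms of \eqref{TYCOrgFeq}--\eqref{TYCOrgSeq} gives $\beta L(f+r)(m+s)-\delta N$ for $N=f+m+s$, which equals $-\delta K<0$ on $\{N=K\}$ \emph{independently of $\mu$}; combined with the quasi-positivity checks of Lemma \ref{lem:l1} (which all pass as long as $L\ge 0$), the convex set $\{f,m,s,r\ge 0,\ N\le K\}$ is forward invariant for the kinetic system and, by the standard invariant-region theorem for equal diffusivities with Neumann data, for the PDE as well. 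Hence for sub-capacity initial data no choice of $\mu$ can make $L$ negative, $m$ never leaves $[0,\infty)$, and the blow-up mechanism is unavailable: the corollary as stated for \emph{any} positive initial data is not merely unproven by the paper's method but appears to be false on that part of the phase space. This is a genuine finding, not a defect of your attempt.

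Two points of cleanup. First, state the invariance pointwise via the invariant-region theorem rather than through $\frac{d}{dt}\int_\Omega N\,dx$; the integral inequality alone does not control $N$ pointwise, so as written that step of your argument is not airtight even though the conclusion is correct. Second, your proposed repair --- restrict to data with $N(x,0)>K$ on a set of positive measure so that $L<0$ is accessible, let the large-$r$ damping $\tfrac12 rm\beta L$ together with the source $\beta f s L<0$ drive $m$ below zero, and then read off $\mu^*$ as in \eqref{muthreshold} --- is the right way to salvage a true statement, but you should flag that in that regime the preceding corollary (the $\mu=0$, large-data case) already yields blow-up, so in the four-species model a large introduction rate plays a qualitatively different, and much weaker, role than it does in Theorem \ref{thm:2a1}.
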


%
%
%
%

\section{Numerical Experiments}

In this section, numerical experiments are given to illustrate that negativity of solutions or finite time blow up is a possibility with the classical three species model.  Comparable numerical results can be established for the four species situation.  In the numerical simulations, the dynamical system is scaled into dimensionless form.  In particular, we let $f \rightarrow \dfrac{f}{K}$, $m \rightarrow \dfrac{m}{K}$, $s \rightarrow \dfrac{s}{K}$ , $\tau \rightarrow \delta t$ and  $r=\dfrac{\beta K}{2\delta}$. The dimensionless variables $r$ is a ratio of the two time scales in the TYC model, that is, the birth and death rates.  The dimensionless system takes the form:
\begin{eqnarray}
\label{eq:nondimF} \dot{f} &=& rmfL -f \\
\label{eq:nondimM} \dot{m} &=& rmfL +2rsfL -m\\
\label{eq:nondimS} \dot{s} &=& \gamma-s
\end{eqnarray}
where the logistic term is $L=1-(f+m+s).$  In \cite{JingJing2019}, population experiments of guppy fish were given and subsequently used to determine the best parameters, in the least squares sense, to the mating model, \eqref{eq:nondimF}-\eqref{eq:nondimM}, with no supermales.  The best fit parameters suggested $r\approx 17.8125.$  This value of $r$ will be used throughout the numerical simulations. All simulations are computed using a Runge-Kutta-Fehlberg (RK45) and conducted in Matlab\circledR.

\begin{figure}[ht]
\centering
\begin{tabular}{cc}
\includegraphics[scale=0.16]{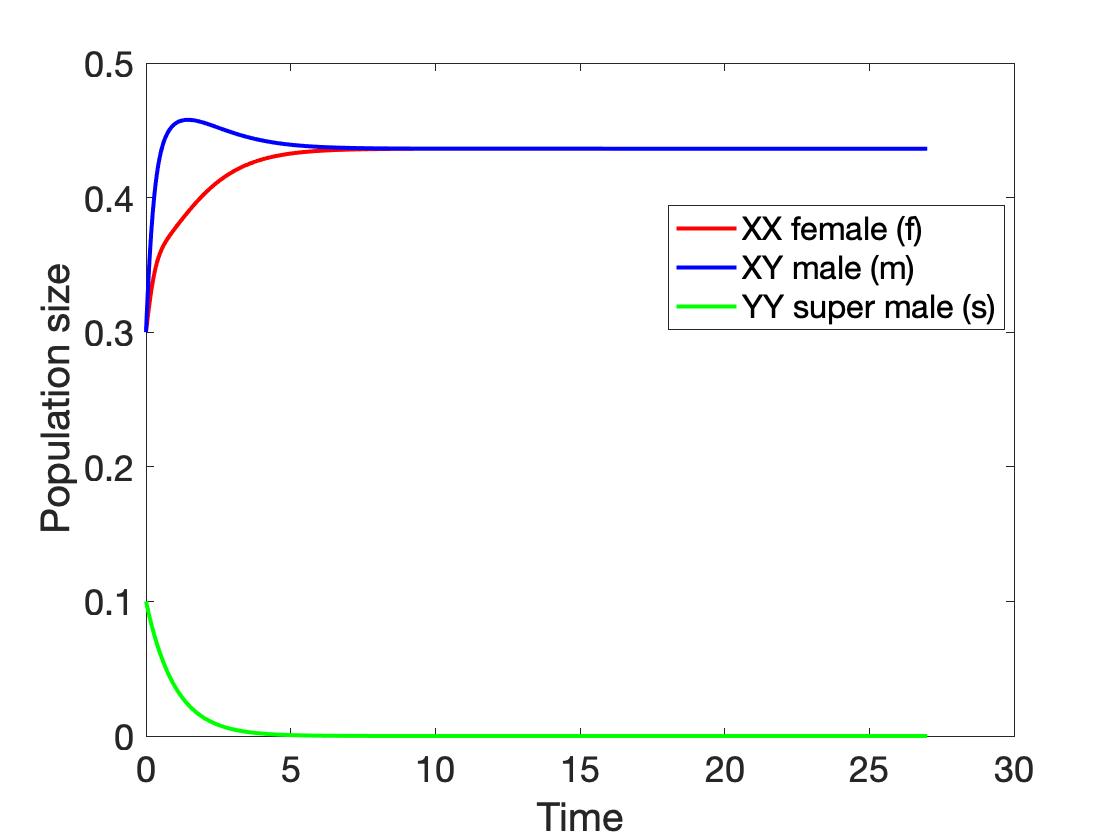} &
\includegraphics[scale=0.16]{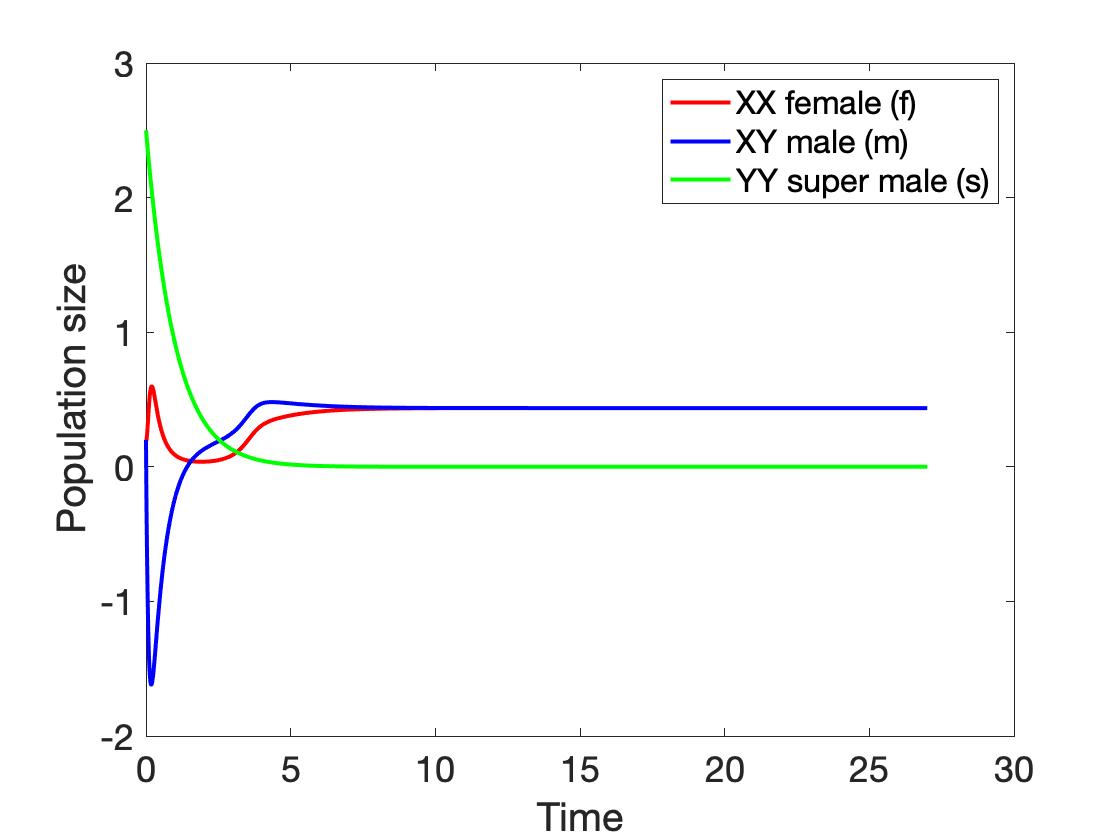}\\
(a) & (b)
\end{tabular}
\begin{tabular}{c}
\includegraphics[scale=0.16]{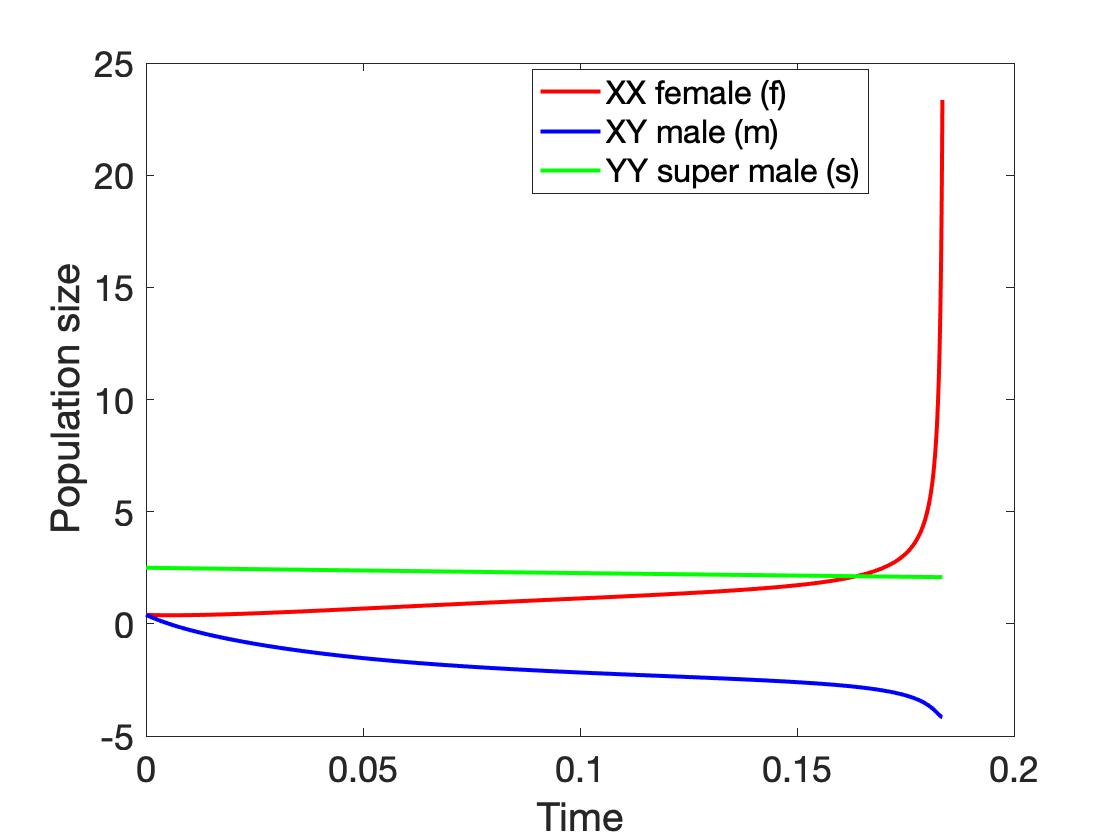} \\
(c)
\end{tabular}
\caption{(a) Positive solutions are shown given the initial conditions of $f(0)=m(0)=.3$ and $s(0) = .1$.  (b) The simulation with $f(0)=m(0)=.3$ and $s(0)=2.5$.  Notice that the male population is clearly negative for an interval.  Hence, $s(0)=2.5 > s^*$. (c) The population densities given initial conditions of $f(0)=m(0)=.4$ and $s(0)=2.5$. The female population is tending towards infinity and we estimate the blow-up time as $T^* \approx 0.18.$}
\label{fig:negMODE}
\end{figure}

In Fig.~\ref{fig:negMODE}(a) we show the populations over time for initial conditions $f(0)=m(0)=.3$ and an initial introduction rate of supermales of $s(0)=.1$ with $\gamma=0.$  It is clear, that the populations remain nonnegative through the computational domain.  However, as we increase the initial supermale population then negative solutions can persist in the male population.  This is shown in Fig.~\ref{fig:negMODE}(b) in the case of $f(0)=m(0)=.3$ and $s(0)=2.5$.  Clearly, $f(0) + m(0) + s(0) = 3.1 > 1,$ which leads to $L<0$ initially.  This causes a large decline in the male population since $\dot{m}(0) \ll 0$.  Subsequently, the male population becomes negative.

In turn, we determine the threshold $s^*$, which may depend on $f(0)$ and $m(0)$, such that $\forall s(0) \geq s^*$ there exists an interval $I \subset (0, \infty)$ for which $\forall t \in I$ we have $m(t)<0$.  In the situation for $f(0) = m(0) = .3$ then $s^*\approx 0.9194$.  In Fig.~\ref{fig:thresholds} we show the critical threshold, $s^*$, for initial populations $f(0) = m(0) \in [.1, .5].$

\begin{figure}[ht]
   \centering
\includegraphics[scale=0.3]{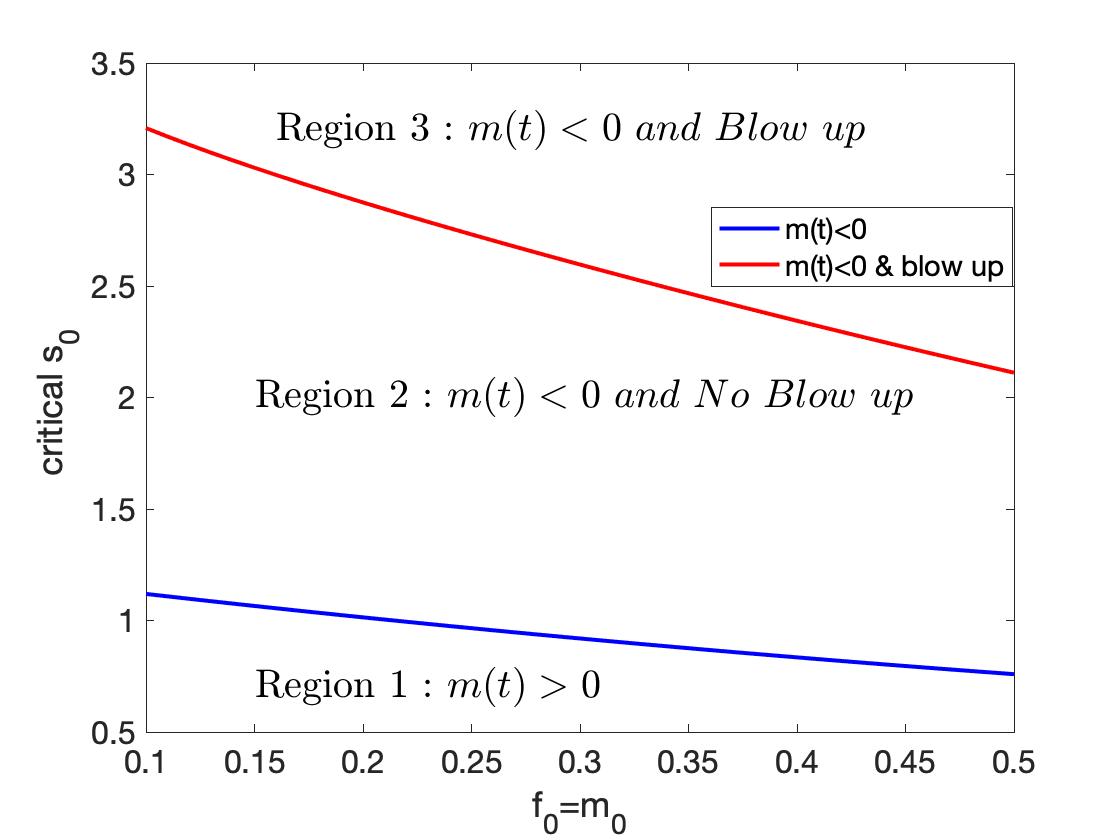}
\caption{Three regions in the phase space are shown.  In Region 1, where $s(0)<s^*$, positive solutions are guaranteed.  In Region 2, where $s^{**}>s(0)\geq s^*$, negative solutions exist but finite time blow up does not occur in either the female or male populations.  In Region 3, for $s(0)\geq s^{**}$ negative solutions exist and blow-up in finite time occurs.}
\label{fig:thresholds}
\end{figure}

If the initial supermale populations is further increased blow-up is possible.  We define $s^{**}$ as the threshold value of initial supermales such that $\forall s(0) \geq s^{**}$ then $\limsup_{t \rightarrow T^{*} < \infty} f \rightarrow + \infty$ for finite time $T^*$, deemed the blow-up time.  To illustrate, let $f(0) = m(0) = .4$ and $s(0) = 2.5$.  Then $f(t)$ blows-up in finite time as shown in Fig. \ref{fig:negMODE}(c).  The threshold $s^{**}$ is documented in Fig. \ref{fig:thresholds}.

Next, consider the situation where $s(0) = 0$.  Let $\gamma^*$ be the critical introduction rate of supermales such that for all $\gamma \geq \gamma^*$ there exists an interval $I\subset (0,\infty)$ such that $m(t)<0$ for all $t \in I$.  Further, by Theorem \ref{thm:2a1} it is known that a critical $\gamma^{**}$ exists such that for all $\gamma \geq \gamma^{**}$ the female or male population will blow up in finite time.  Here, we determine the critical introduction rates for initial conditions $f(0) = m(0) \in  [.1,.5]$.  The results are shown in Fig. \ref{fig:critgammaODE}.

\begin{figure}[ht]
   \centering
\begin{tabular}{cc}
\includegraphics[scale=0.30]{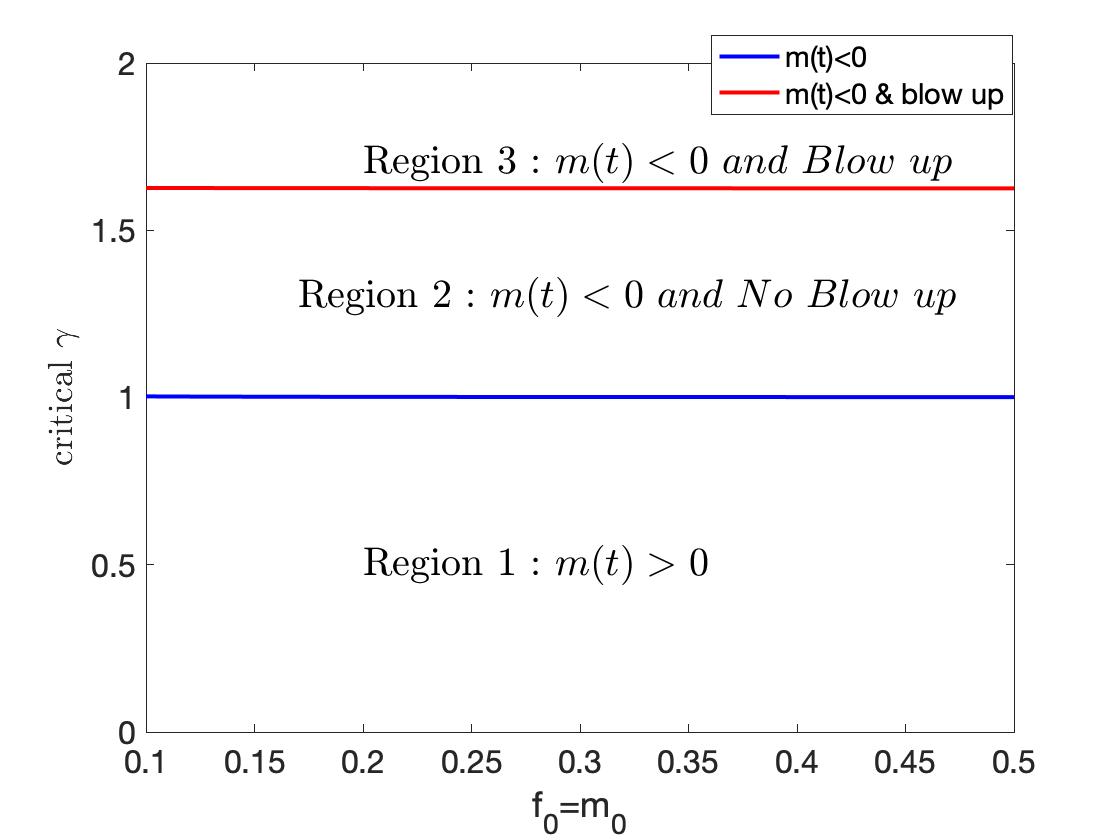}
\end{tabular}
\caption{Three regions in the phase space are shown.  In Region 1, where $\gamma<\gamma^*$, positive solutions are guaranteed.  In Region 2, where $\gamma^{**}>\gamma\geq \gamma^*$, negative solutions exist but finite time blow up does not occur in either the female or male populations.  In Region 3, for $\gamma\geq \gamma^{**}$ negative solutions exist and blow-up in finite time occurs. In each simulation, $s(0) = 0.$  It is evident that both thresholds are independent of the initial condition size.}
\label{fig:critgammaODE}
\end{figure}

As indicated in Section ~\ref{PDE}, blow-up is possible in the partial differential equation model.  Here, we consider the dimensionless spatial-temporal TYC model,
\begin{eqnarray}
\label{eq:nondimFPDE} \frac{\partial f}{\partial t} &=& D \Delta f + rmfL -f \\
\label{eq:nondimMPDE} \frac{\partial m}{\partial t} &=& D \Delta m + rmfL +2rsfL -m\\
\label{eq:nondimSPDE} \frac{\partial s}{\partial t} &=& D \Delta s + \gamma-s
\end{eqnarray}
specified over the scaled spatial domain $\Omega = (0,1)$ and $t\in (0,\infty)$.  Again, homogenous Neumann boundary conditions are assumed.  Clearly, if we assume constant initial conditions, then the solution for $f(x,t)$, $m(x,t)$, and $s(x,t)$ are constant for all $x \in \Omega$ for a fixed value of $t$.  Hence, comparable numerical results are expected in such a case.  In Fig. \ref{fig:PDE}(a) we show a simulation for which negative solutions exist in the male population but finite time blow-up does not occur for $s(x,0) = 2.5$.  Likewise, by increasing the initial condition of supermales to a value of $s(x,0) = 2.75$ we see finite time blow up at $t\approx t=.1899399$.  This is shown in Fig. \ref{fig:PDE}(b).

\begin{figure}[ht]
   \centering
\includegraphics[scale=0.425]{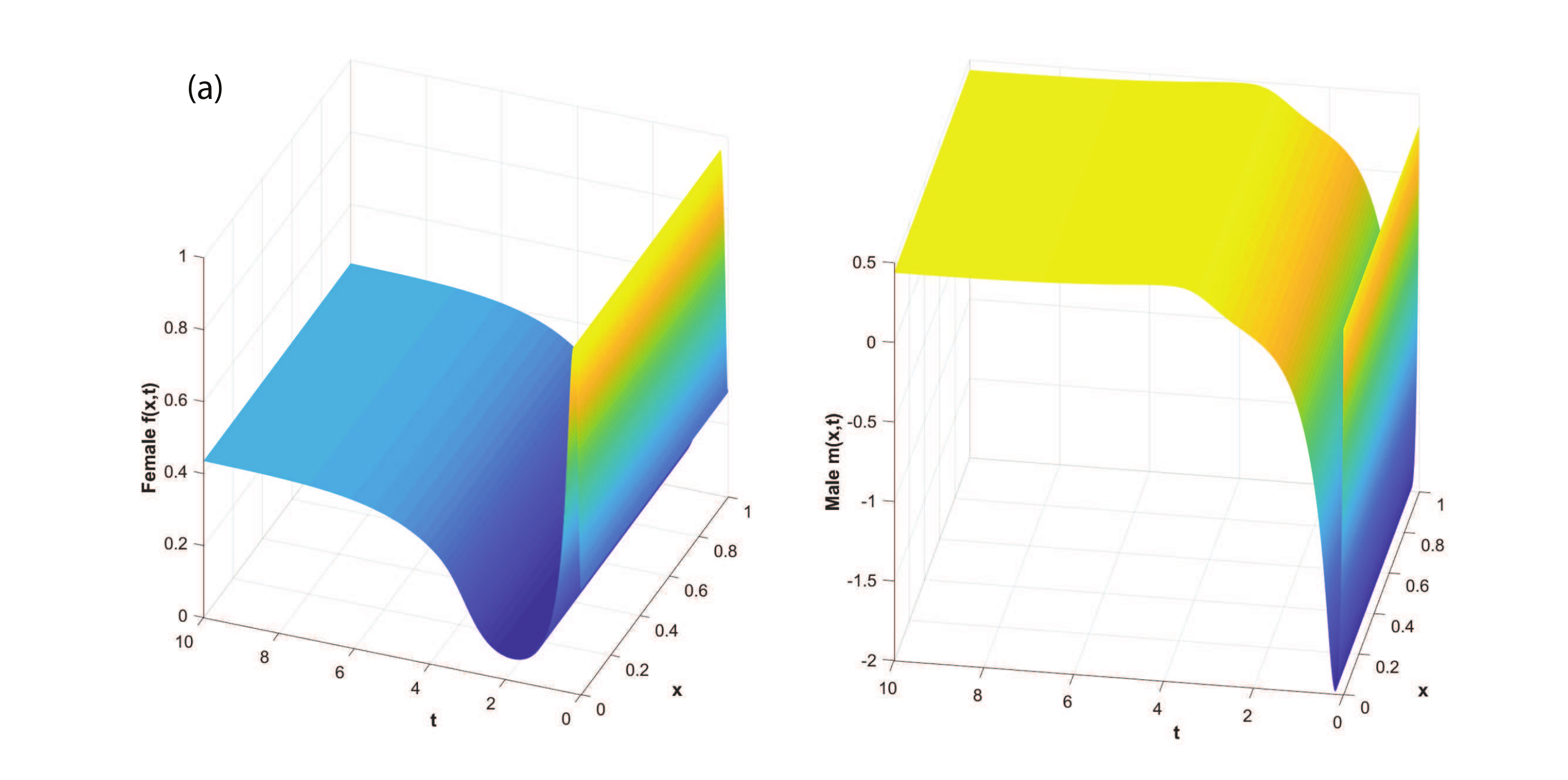}
\includegraphics[scale=0.425]{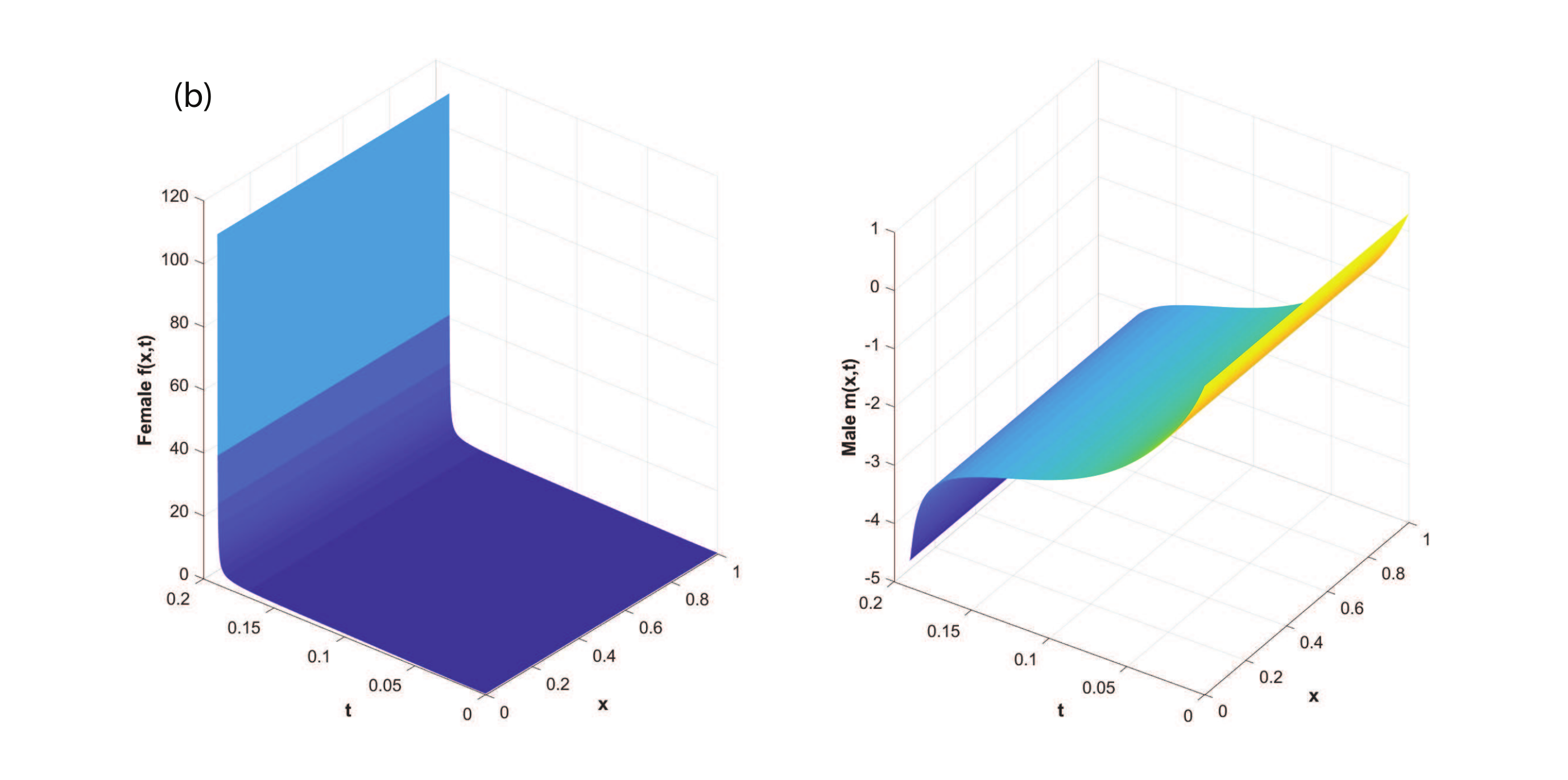}
\caption{(a) Simulation showing negative solutions in the male population for initial conditions $f(x,0)=m(x,0)=.3$ and $s(x,0)=2.5$.  (b) Simulation with $f(x,0)=m(x,0)=.3$ and $s(x,0) = 2.75$.  The increase in the initial amount of supermales results in finite time blow-up in the female population.  Simulations were conducted used Matlab\circledR's pdepe built-in partial differential solver.}
\label{fig:PDE}
\end{figure}

In the case of homogenous Dirichlet boundary conditions it is clear that the boundaries could prevent blow up unless the initial conditions are large enough in norm such that the reaction terms dominate over the diffusive processes.  Indeed, numerical simulations suggest that negativity of solutions and finite time blow-up is possible. Here, we assume homogenous Dirichlet boundary conditions and let the diffusion constant be $.01$.  Assume $f(x,0) = m(x,0) = x(1-x)$ and $s(x,0) = 4 s_{max} x (1-x)$. In Fig. \ref{fig:PDEDIRCH}(a) we show simulations for $s_{max} = 2$ which generates negative solutions in the male population.  In Fig. \ref{fig:PDEDIRCH}(b) finite time blow-up at occurs $t\approx .1901902$ in the female population when $s_{max}$ is increased to $3$.

\begin{figure}[ht]
   \centering
\includegraphics[scale=0.425]{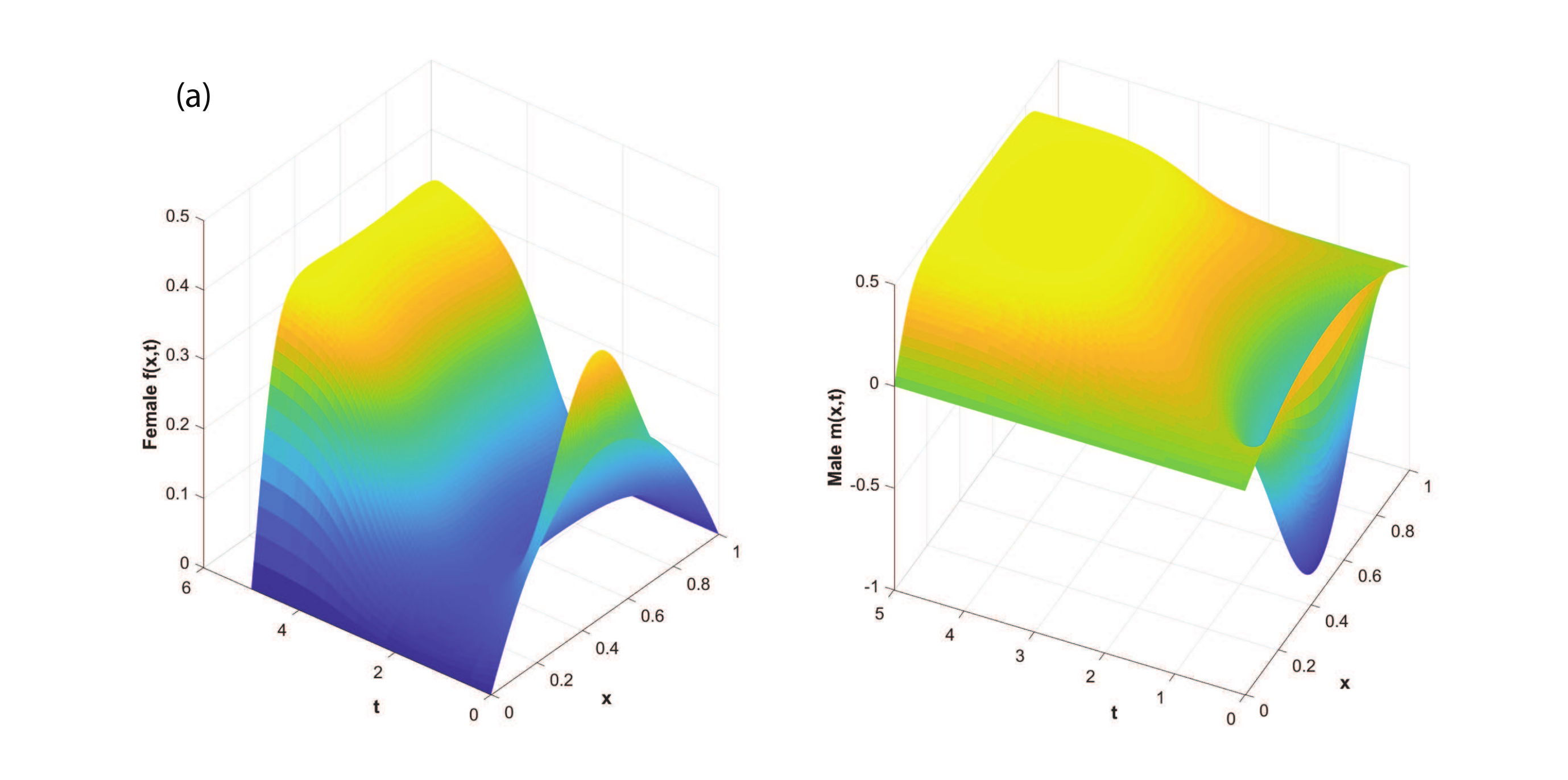}
\includegraphics[scale=0.425]{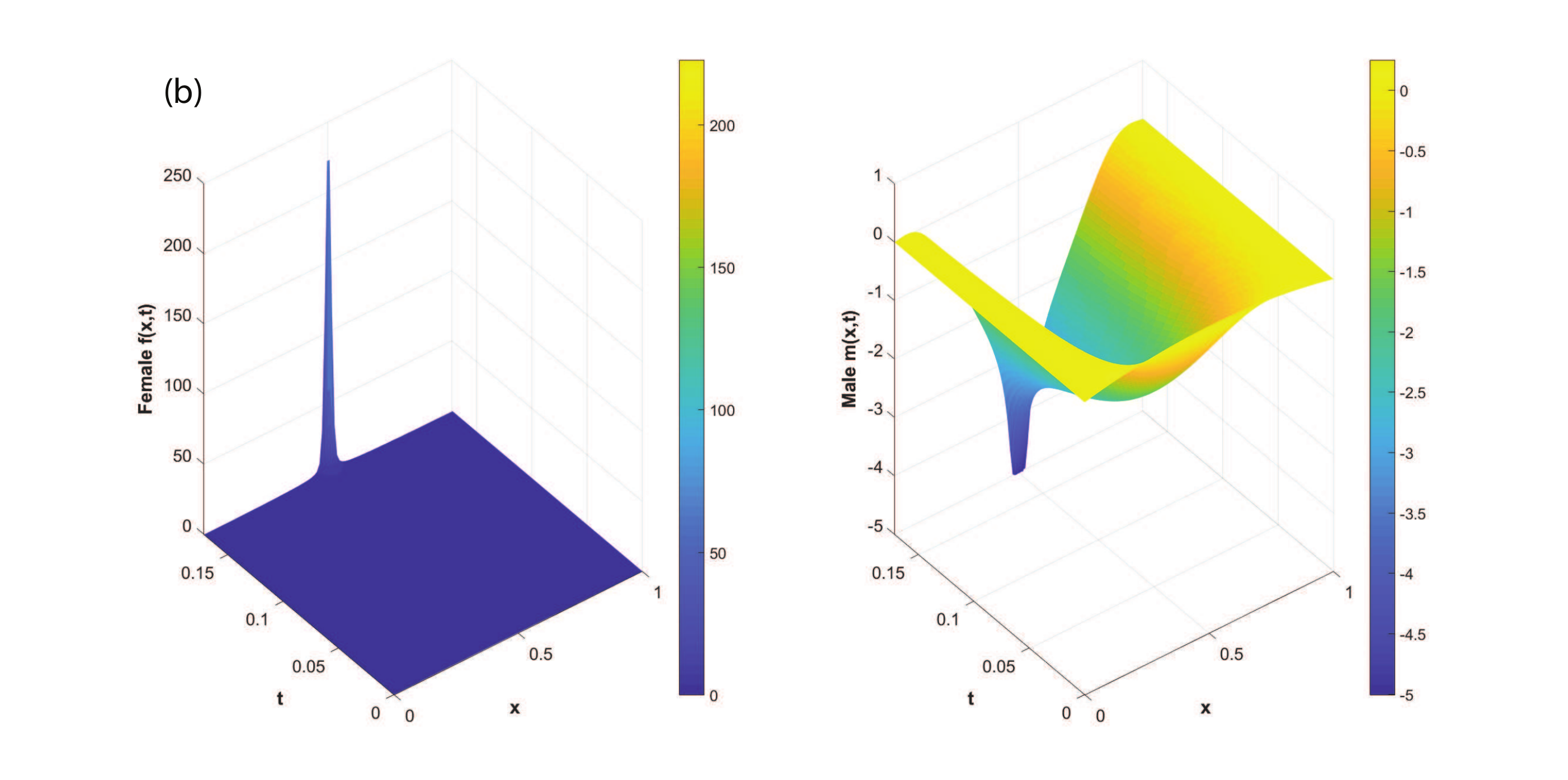}
\caption{(a) Simulation showing negative solutions in the male population for initial conditions $f(x,0)=m(x,0)=x(1-x)$ and $s(x,0)=4s_{max} x(1-x)$, where $s_{max}=2.$  (b) Simulation with the same initial conditions but with $s_{max} = 3$.  The increase in the maximum number supermales results in finite time blow-up in the female population. Each simulation uses $D=.01$. }
\label{fig:PDEDIRCH}
\end{figure}

\section{Discussion and Conclusion}

This paper proves and provides numerous numerical experiments indicating negativity of solutions or finite blow up is possible.  Therefore, caution and discretion should be taken in choosing the parameter regime and initial condition size when utilizing the classical TYC model for predictions of the efficacy of the TYC strategy.

Of course, revisions to the TYC model may remove this inconsistency with physical reality that the current paradigm displays.  A promising revision and modification to the classical TYC model are models that include the Allee effect \cite{BeauOptModTYC2019,SDE2013} and intraspecies competition \cite{BeauOptModTYC2019} for female mates by the males and supermales.  In \cite{BeauOptModTYC2019} a new model to the three species TYC strategy given in the same scaled variables as in \eqref{eq:nondimF}-\eqref{eq:nondimS} is:
\begin{eqnarray}
\label{feq1Mod} \dot{f} &=& r L \left( \frac{f}{a}-1 \right) \left( \frac{m}{m+s} \right) fm - f,\\
\label{meq1Mod} \dot{m} &=& r \frac{Lf}{m+s} \left( \frac{f}{a}-1 \right) \left( m^2 + 2 s^2 \right) - m,\\
\label{seq1Mod} \dot{s} &=& \gamma - s,
\end{eqnarray}
where $a\ll 1$, is the Allee threshold. However, numerical experiments indicate that negativity of solutions remains a possibility.

Assume $\gamma=0$ then for any initial conditions on $f(0)$ and $m(0)$ there exists a critical value of $s^*$ such that male population will become negative for any initial supermale population above or equal to $s^*$. In In Fig.~\ref{fig:thresholdsMOD} we show the threshold in the range of initial values of supermales that leads to negative solutions for $f(0)=m(0) \in [.1, .5]$.  Interestingly, finite time blow-up was not observed in any population.  It is conjectured that the intraspecies competition terms removes this unrealistic dynamic since large populations will attenuate the growth rates of the female and male populations.

\begin{figure}[ht]
   \centering
\includegraphics[scale=0.66]{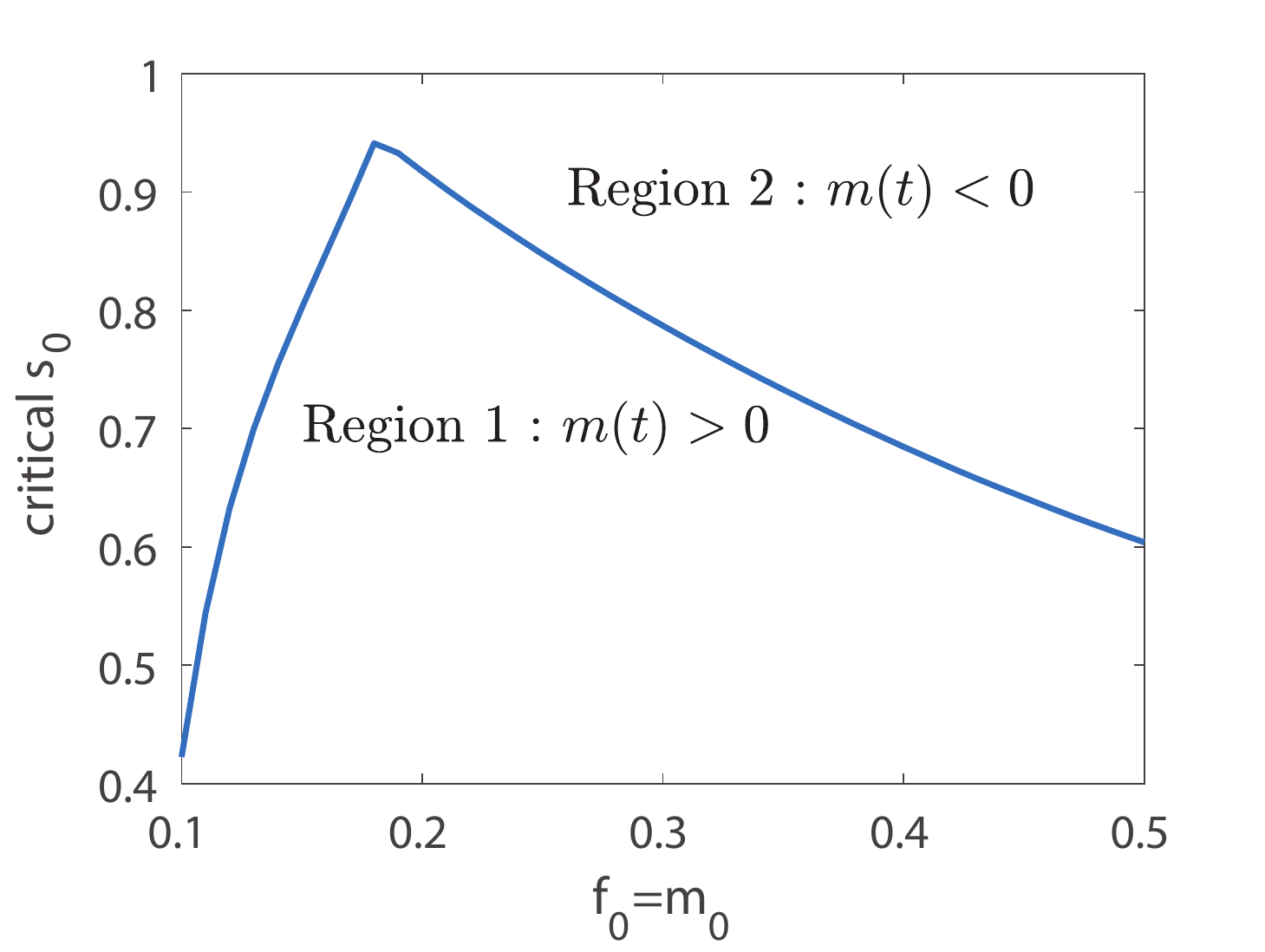}
\caption{Two regions in the phase space are shown.  In Region 1, where $s(0)<s^*$, positive solutions are guaranteed.  In Region 2, where $s(0)\geq s^*$, negative solutions exist. No simulation rendered finite time blow-up in either the female or male populations.}
\label{fig:thresholdsMOD}
\end{figure}

In contrast, the Allee effect affects the differentiability of the threshold curve since at small populations the Allee effect can dominate the dynamics.  To illustrate, we determine the threshold for the model without the Allee effect and see the threshold monotonically decreases as we increase the value of the initial female and male populations.  This is shown in Fig.~\ref{fig:thresholdsMODNOALLEE}.  In fact, the intraspecies competition for mates seems to have little effect on the negative solutions, however, blow-up is not observed.  This suggests, that the inclusion of intraspecies competition is a crucial modeling feature.  However, additional refinement of the TYC model is still required to eliminate the possibility of negative solutions.
\begin{figure}[ht]
   \centering
\includegraphics[scale=0.65]{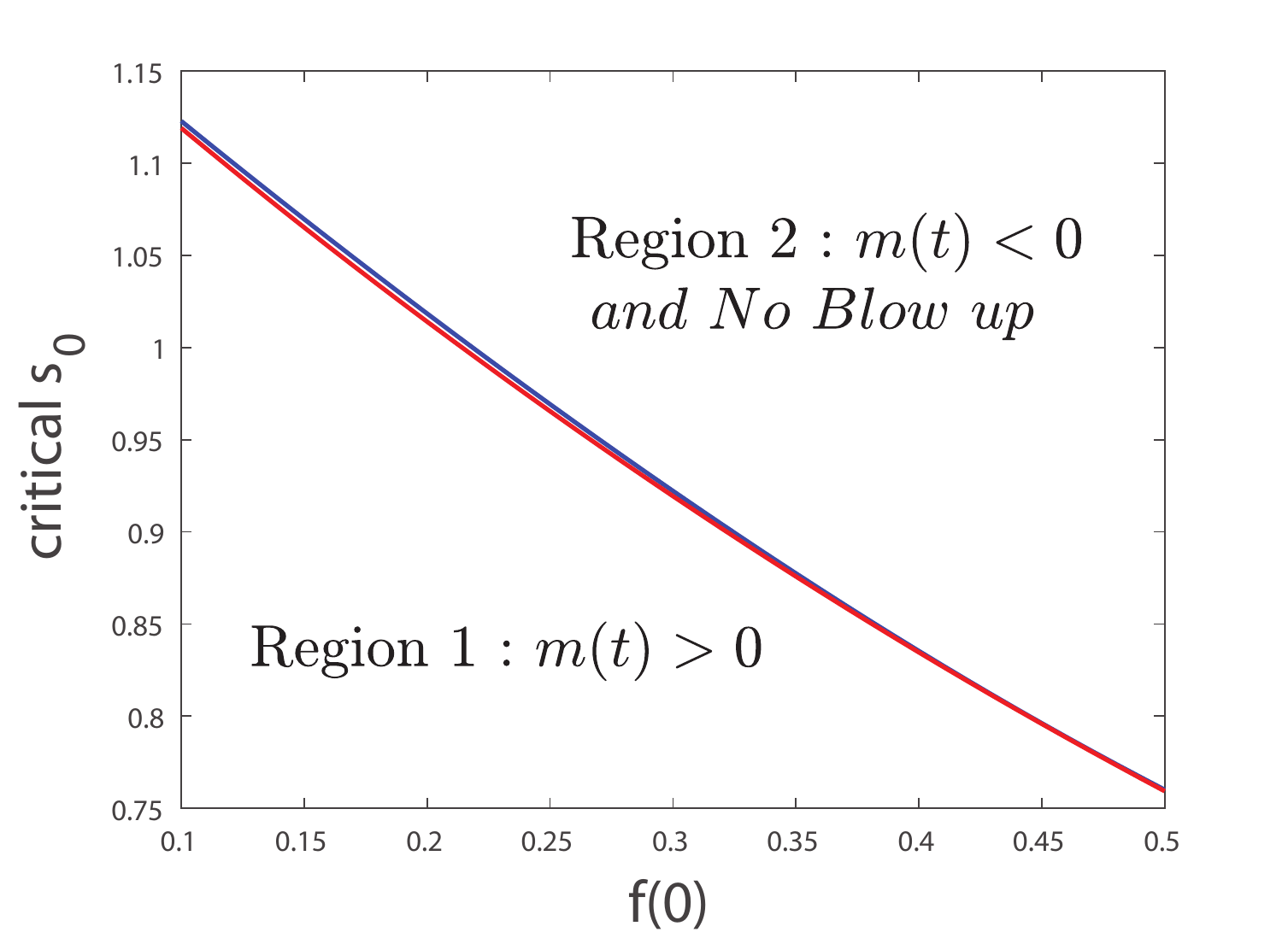}
\caption{Two regions in the phase space are shown.  In Region 1, where $s(0)<s^*$, positive solutions are guaranteed.  In Region 2, where $s(0)\geq s^*$, negative solutions exist. No simulation rendered finite time blow-up in either the female or male populations. The threshold for (red) Eqs. \eqref{eq:nondimF}-\eqref{eq:nondimS}
and (blue) Eqs. \eqref{feq1Mod}-\eqref{seq1Mod} with no Allee effect are given.}
\label{fig:thresholdsMODNOALLEE}
\end{figure}

The cause of the negativity of solutions is a result of the form of the logistic term, $L = 1 - (f+m+s)$.  This term properly stems from consideration of the standard logistic model of population growth, $\dot{P} = \alpha P (1 - P/K).$ Hence, the term $1-P/K$ is the motivation behind $L$ in the model of TYC.  However, in the standard population model when $P>K$ we have $\dot{P}<0$.  In the current situation, $P=f+m+s$ and if $\dot{P}<0$ there is no guarantee that $\dot{f}, \dot{m},$ or $\dot{s}$ are all simultaneously negative nor the populations all positive or comparable size.  Therein, lies the essential modeling flaw.

Alternatives to the logistic term are plentiful.  For instance, one such alternative is to consider a logistic term of the form $\mbox{exp}(1 - (f+m+s))$.  Hence, a populations are still penalized with a dampened growth rate when populations exceed the carrying capacity.  However, preliminary analysis of this type of logistic term have proven difficult for mathematical analysis and moreover can generate stable nontrivial equilibrium solutions such that the total population size asymptotically approaches values above the carrying capacity. Another alternative, is based on the work of mating models (see \cite{Chavez1995,Hadeler1989,Tianran2005} and references therein), where the female/male and female/supermale mating enter are modeled by mating functions $\phi_1(m,f)$ and $\phi_2(s,f)$ such that $\phi_1(0,f)=\phi_1(m,0)=\phi_2(0,f)=\phi_2(s,0)=0$.  Criteria on the mating can then be established to guarantee positivity of solution in the model of the TYC strategy.

In all, this paper indicates it still remains an open problem to determine a complete mathematical model for the TYC strategy that is valid in a full parameter regime and yields realistic solutions.

\section*{Acknowledgements}

ET and RP would like to acknowledge valuable support from the NSF via DMS-1715377 and DMS-1839993. MB, TG and LB would like to acknowledge valuable support from the NSF via DMS-1715044.



\begin{thebibliography}{99}



\bibitem{A06}
M. Arim, S. Abades, P. Neill, M. Lima and P. Marquet,
\newblock \emph{Spread Dynamics of invasive species},
\newblock P. Natl. Acad. Sci. USA,
\newblock Vol. 103, No. 2, pp. 374-378, 2006.

\bibitem{A12} I. Averill and Y.Lou,
\newblock \emph{On several conjectures from evolution of dispersal},
\newblock J. Biol. Dynam., Vol. 6, No. 2, pp. 117-130, 2012.

\bibitem{B07}
C.J. Bampfylde and M.A. Lewis,
\newblock \emph{Biological control through intraguild predation:case studies in pest control, invasive species and range expansion},
\newblock  B. Math. Biol., Vol. 69, pp. 1031-1066, 2007

\bibitem{BeauOptModTYC2019}
M.A. Beauregard, R.D. Parshad, S. Boon, H. Conaway, T. Griffin, and J. Lyu,
\newblock \emph{Optimal Control and Analysis of a Modified Trojan Y-Chromosome Strategy}, arXiv:1907.03818.

\bibitem{Chavez1995}
C. Castillo-Chavez, W. Huang,
\newblock \emph{The Logistic Equation Revisited: The Two-Sex Case}
Math. Biosci., Vol. 128, pp. 299-316, 1995.

\bibitem{C01}
J.S. Clark, M. Lewis and L. Horvath,
\newblock \emph{Invasion by extremes: Population spread with variation in dispersal and reproduction},
\newblock Am. Nat., Vol. 157, No. 5, 2001.

\bibitem{CW07a} S. Cotton and C. Wedekind
\newblock \emph{Control of introduced species using Trojan sex chromosomes},
\newblock Trends Ecol. Evol., Vol. 22, pp. 441-443, 2007.

\bibitem{CW07b} S. Cotton and C. Wedekind,
\newblock \emph{Introduction of Trojan sex chromosomes to boost population growth},
\newblock J. Theor. Biol., Vol. 249, pp. 153-161, 2007.

\bibitem{CW09} S. Cotton and C. Wedekind,
\newblock \emph{Population consequences of environmental sex reversal},
\newblock Conserv. Biol., Vol. 23, pp. 196-206, 2009.




\bibitem{GutierrezTeem06} J.B. Gutierrez and J. Teem
\newblock \emph{A model describing the effect of sex-reversed YY fish in an established wild population: the use of a Trojan Y-Chromosome to cause extinction of an introduced exotic species},
\newblock J. Theor. Biol, Vol. 241, No. 22, pp. 333-341, 2006.

\bibitem{Gutierrez12} J. B. Gutierrez, M. K. Hurdal, R. D. Parshad and J. L. Teem,
\newblock \emph{ Analysis of the Trojan Y Chromosome Model for Eradication of Invasive Species in a Dendritic Riverine System},
\newblock J. Math. Biol., Vol. 64, No. 1-2, pp. 319-340, 2012.

\bibitem{Hadeler1989}
K.P. Hadeler, {\emph Pair Formation in Age-Structured Populations},
Acta. Appl. Math., Vol 14, pp 91-102, 1989.

\bibitem{Schill18} P.A. Kennedy, K.A. Meyer, D.J. Schill, M. R. Campbell, and N.V. Vue,
\newblock \emph{Survival and reproduction success of hatchery YY male brook trout stocked in Idaho streams},
\newblock T. Am. Fish. Soc., Vol. 147, pp. 419-430, 2018.

\bibitem{Kramer2009}
A.M. Kramer, \textit{The evidence for Allee effects}, Popul. Ecol., Vol. 51, pp. 341–354 (2009).

\bibitem{L12}  Y. Lou and D. Munther,
\newblock \emph{Dynamics of a three species competition model},
\newblock Discret. Contin. Dyn. S., Vol. 32, pp. 3099-3131, 2012.

\bibitem{JingJing2019}
J. Lyu, P.J. Schofield, K. M. Reaver, M.A. Beauregard, R.D. Parshad, \textit{A comparison of the Trojan Y Chromosome strategy to harvesting models for eradication of non-native species}, arXiv:1810.08279.

\bibitem{PQ07}
\newblock Quittner, P., $\&$ Souplet, P. (2007).
\newblock Superlinear parabolic problems. Blow-up, Global Existence and Steady States, Birkhuser Advanced Texts, Birkhuser, Basel.

\bibitem{M00}
J.H. Myers, D. Simberloff, A.M. Kuris and J.R. Carey,
\newblock \emph{Eradication revisited: dealing with exotic species},
\newblock Trends Ecol. Evol., Vol. 15, pp. 316-320, 2000.

\bibitem{089} A. Okubo, P.K. Maini, M.H. Williamson and J.D. Murray,
\newblock \emph{The spread of the grey squirrel in Britain},
\newblock P. R. Soc. Lond. B Bio, Vol. 238, pp. 113-125, 1989

\bibitem{P11} R. D. Parshad,
\newblock \emph{On the long time behavior of a PDE model for invasive species control},
\newblock Int. J. Mathemat. Anal., Vol. 5, No. 40, pp. 1991-2015, 2011.

\bibitem{ParshadGutierrez11} R. D. Parshad and J. B. Gutierrez,
\newblock{On the Global Attractor of the Trojan Y-Chromosome Model},
\newblock Commun. Pur. Appl. Anal., Vol. 10, pp. 339-359, 2011.

\bibitem{p10} R. D. Parshad and J. B. Gutierrez,
\newblock \emph{On the well posedness of the Trojan Y Chromosome model},
\newblock Bound. Value Probl.,  Vol. 2010, Article ID 405816, pp. 1-29, 2010.

\bibitem{Parshad13} R. D. Parshad, S. Kouachi and J. B. Gutierrez,
\newblock \emph{Global existence and asymptotic behavior of a model for biological control of invasive species via supermale introduction},
\newblock Communications in Mathematical Sciences, Vol. 11, No. 4, pp. 951-972, 2013.

\bibitem{P09} N. Perrin,
\newblock \emph{Sex reversal: a fountain of youth for sex chromosomes}?
\newblock Evolution, vol 63, pp. 3043-3049, 2009.

\bibitem{Schill2017} D. J. Schill, K.A. Meyer, and M. J. Hansen,
\newblock \emph{Simulated effects of YY-male stocking and manual suppression for eradicating nonnative brook trout populations},
\newblock N. Am. J. Fish. Manage., 37:1054–1066, 2017.

\bibitem{Schill16} D.J. Schill, J.A. Heindel, M.R. Campbell, K.A. Meyer $\&$ E.R. Mamer,
\newblock \emph{Production of a YY Male Brook Trout Broodstock for Potential Eradication of Undesired Brook Trout Populations}.
\newblock N. Am. J. Aquacult., 78(1), pp. 72-83, 2016.

\bibitem{SL15} P. Schofield and W. Loftus,
\newblock \emph{Nonnative fishes in Florida freshwaters: a literature review and synthesis},
\newblock Rev. Fish Biol. Fisher., Vol. 25, No. 1, pp. 117-145.

\bibitem{P00}  Pierre, M., $\&$ Schmitt, D., {\emph Blowup in reaction-diffusion systems with dissipation of mass}, SIAM Rev, 42(1), pp. 93-106, 2000.


\bibitem{P10} Pierre, M., {\emph Global existence in reaction-diffusion systems with control of mass: a survey}, Milan J. Math., 78(2), pp. 417-455, 2010.


\bibitem{S97} N. Shigesada and K. Kawasaki,
\newblock ``Biological invasions:Theory and practice",
\newblock Oxford University Press, Oxford, 1997.

\bibitem{TGP13} J.L. Teem, J.B. Gutierrez and R.D. Parshad,
\newblock \emph{A comparison of the Trojan Y Chromosome model and daughterless carp eradication strategies},
\newblock Bio. Invasions, Vol. 16, No. 6, pp. 1217-1230, 2014.

\bibitem{Tianran2005}
Z. Tianran, W. Wang, {\emph Mathematical models of two-sex population dynamics}, K$\hat{o}$ky$\hat{u}$roku, Vol. 1432, pp. 96-104, 2005.

\bibitem{V96}  R. Van Driesche and  T. Bellows
\newblock ``Biological Control",
\newblock  Kluwer Academic Publishers, Massachusetts, 1996.

\bibitem{SDE2013} X. Wang, R. D. Parshad and J. Walton,
\newblock \emph{The Stochastic Trojan Y Chromosome model for eradication of an invasive species},
\newblock J. Biol. Dynam., Vol. 10, Issue 1, pp. 179-199, 2016.

\bibitem{GP12} Gutierrez, J. B., Hurdal, M. K., Parshad, R. D., $\&$ Teem, J. L., {\emph Analysis of the Trojan Y chromosome model for eradication of invasive species in a dendritic riverine system}, J. Math. Biol., 64(1-2), pp. 319-340, 2012.

\bibitem{WP14} X. Wang, J. R. Walton, R. D. Parshad, K. Storey and M. Boggess,
\newblock \emph{Analysis of {\it Trojan Y-Chromosome} eradication strategy},
\newblock J. Math. Biol., 68(7), pp. 1731-1756, 2014.



\bibitem{Z12} X. Zhao, B. Liu and D. Ning,
\newblock \emph{Existence of global attractor for the Trojan Y Chromosome model},
\newblock Electron. J. Qual. Theo., No. 36, 16, 2012.

\end{thebibliography}
\end{document}